\theoremstyle{plain}
\newcommand{\Ric}{\mbox{Ric}}
\newtheorem{theorem}{Theorem}[section]
\newtheorem{lemma}[theorem]{Lemma}
\newtheorem{corollary}[theorem]{Corollary}
\theoremstyle{remark}
\newtheorem{remark}[theorem]{Remark}
\numberwithin{equation}{section}
\title[Curvature Deformations on Complete Manifolds with Boundary]{Curvature Deformations on Complete Manifolds with Boundary}
\author[T. Cruz]{Tiarlos Cruz}
\address[T. Cruz]{ Institute of Mathematics, 
	Federal University of Alagoas
	\newline\indent 
	57072-970, Maceió-AL, Brazil}
\email{\href{mailto: cicero.cruz@im.ufal.br}{cicero.cruz@im.ufal.br}}
\author[A. Silva Santos]{Almir Silva Santos}
\address[A. Silva Santos]{Department of Mathematics, 
	Federal University of Sergipe
	\newline\indent 
	49107-230, Sao Cristov\~ao-SE, Brazil}
\email{\href{mailto: almir@mat.ufs.br}{ almir@mat.ufs.br}}
\dedicatory{Dedicated to the memory of Celso Viana}
\thanks{ TC was partially supported by CNPq grant number 307419/2022-3 and 405468/2021-0. ASS was partially supported by CNPq grant number 403349/2021-4 and 312027/2023-0. The authors were  supported by CNPq grant 408834/2023-4, 444531/2024-6, 403770/2024-6 and FAPEAL, Brazil (Process E:60030.0000002329/2022)}
\subjclass[2020]{58J32, 35B09, 35J60}
\keywords{Prescribed curvature problem, Conformal metric, Geometric equation, Deformation of metrics}
\begin{document}
\begin{abstract}
This paper investigates conformal deformations of the scalar curvature and mean curvature on complete Riemannian manifolds with boundary. We establish sufficient conditions for the existence of conformal deformations to complete metrics with positive scalar curvature and mean convex boundary. Building upon these results, we explore further deformation scenarios, including those that increase the mean curvature. Finally, we consider the case of deforming complete manifolds with negative scalar curvature  on manifolds with noncompact boundary. 
\end{abstract}

\maketitle


\section{Introduction}

The scalar curvature, being the simplest pointwise curvature invariant, plays a fundamental role in Riemannian geometry. Recognizing that the topology of a manifold exerts a significant influence on its geometric structures, a central question arises. Under what conditions does a Riemannian manifold admit metrics with positive or negative scalar curvature?


On closed surfaces, the Gauss-Bonnet Theorem provides a beautiful and powerful illustration of the interplay between topology and geometry. It explicitly relates the Euler characteristic, a purely topological invariant, to the Gaussian curvature, a fundamental geometric quantity. In higher dimensions, this interplay becomes considerably more subtle. Although topological constraints on scalar curvature undoubtedly exist, their nature is significantly more complex and remains an area of active research. We refer the reader to \cite{MR4577902, MR2408269} for a more in-depth exploration of this fascinating area.


Determining which manifolds can support a complete Riemannian metric of positive scalar curvature is a fundamental problem in differential geometry. A significant early advancement in this area was made by A. Lichnerowicz \cite{MR156292}. Following this, R. Schoen and S. T. Yau utilized minimal hypersurface methods in \cite{MR541332,MR535700} to prove that tori of dimension $n \leq 7 $ do not admit a metric of positive scalar curvature. In 1982, J. Kazdan \cite{MR675736} further enriched the field by identifying specific conditions under which a scalar-flat metric can be transformed into one with positive scalar curvature.  This result became a pivotal element in the renowned work of M. Gromov and B. Lawson \cite{MR569070}, who introduced novel obstructions to the existence of such metrics. They also proved that the task of finding a metric with positive scalar curvature that has a strictly mean convex boundary can be approached through a doubling argument (for a rigorous exposition of this method, see \cite{MR4728702}, as well as \cite{MR720933}). Since then, the theory has advanced significantly. For a comprehensive overview of recent developments, refer to \cite{BC, MR4218620, MR4728702, MR4666636, MR4390515}, along with the excellent survey by O. Chodosh and C. Li \cite{MR4577915}.

The purpose of this work is to address the problem of conformally deforming the scalar curvature and the mean curvature on complete manifolds with boundary. To the best of our knowledge, there are few results on the existence of conformally complete metrics on manifolds with boundary. See, for example, \cite{MR4011798,MR3731640,MR3667430,MR2269420}, although these works focus exclusively on manifolds with compact boundary.

In contrast to the compact case, the presence of a boundary can significantly alter the landscape of topological obstructions. For example, as proved in \cite{MR2269420}, a broad class of complete manifolds with compact boundary exhibits a surprising phenomenon: any smooth function on the boundary can be realized as the mean curvature of a complete scalar-flat metric.


Inspired by Kazdan's ideas \cite{MR675736}, our first result addresses the problem of deforming the metric of a Riemannian manifold with boundary, which may not be complete or compact, into one with positive scalar curvature, without imposing any restrictions on the behavior of the manifold at infinity.
\begin{theorem}\label{main1}
Let $(M^n,g)$ be an $n$-dimensional Riemannian manifold (not necessarily complete or compact), with $n\geq 3$. Assume that $(M^n,g)$ has nonnegative scalar curvature and mean convex boundary. If the Ricci tensor is not identically zero, then there exists a metric $\hat g$ on $M$ with the following properties:
\begin{itemize}
    \item[i)] The scalar curvature of $\hat g$ is positive everywhere;
    \item[ii)] The boundary $\partial M$ remains mean convex with respect to $\hat g$. Moreover,   $\partial M$ is  $\hat g$-minimal, if it is also $g$-minimal;
    \item[iii)]  The metrics $\hat g$ and $g$ are uniformly equivalent, that is, there exist positive constants $c_1$ and $c_2$ such that $c_1 g\leq \hat g\leq c_2g$.
\end{itemize} 
Moreover, the metric $\hat g$ is complete if and only if the original metric $g$ is complete.
\end{theorem}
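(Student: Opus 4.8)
The plan is to produce $\hat g$ inside the conformal class of $g$ — or, in one exceptional case, of a small perturbation of $g$. Write $\hat g=u^{4/(n-2)}g$ with $u\in C^\infty(M)$, $u>0$. Then
\[
R_{\hat g}=u^{-\frac{n+2}{n-2}}L_g u,\qquad L_g:=-\tfrac{4(n-1)}{n-2}\Delta_g+R_g,\qquad H_{\hat g}=u^{-\frac{n}{n-2}}\big(a_n\,\partial_\nu u+H_g u\big),
\]
for a positive dimensional constant $a_n$ and a suitable unit normal $\nu$. So I would reduce the theorem to finding $u$ with (a) $c_1\le u\le c_2$ for positive constants $c_i$; (b) $L_g u>0$ in the interior; (c) $\partial_\nu u=0$ on $\partial M$. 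Indeed (b) gives (i); (a) gives (iii); (c) gives $H_{\hat g}=u^{-2/(n-2)}H_g$, which is (ii); and since (iii) makes $d_g$ and $d_{\hat g}$ bi‑Lipschitz equivalent, it forces $(M,\hat g)$ complete $\Leftrightarrow$ $(M,g)$ complete, i.e.\ the last clause. Such a $u$ will be built for $g$ itself unless $M$ is compact with $R_g\equiv0$, where it is built for a perturbation.

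For the main construction — valid for every non‑compact $M$, and for compact $M$ with $R_g\not\equiv0$, using only $R_g\ge0$ and $H_g\ge0$ — I would first build a bounded positive supersolution barrier. Cover $M$ by a locally finite family of small normal balls (half‑balls at the boundary) $B_i=B(p_i,2r_i)$, with each $r_i\le1$ and small enough that $\Delta_g(d_i^2)\ge n$ on $B_i$, where $d_i=d_g(\cdot,p_i)$ (near $\partial M$, use Fermi coordinates $x_n=d_g(\cdot,\partial M)$ and take $d_i^2=|x'-p_i'|^2+x_n^2$, even in $x_n$). With $\varepsilon$ a suitable dimensional constant, set $v_i:=1+\varepsilon(4r_i^2-d_i^2)$ on $B_i$: then $1\le v_i\le1+4\varepsilon$, $L_g v_i=\tfrac{4(n-1)}{n-2}\varepsilon\,\Delta_g(d_i^2)+R_g v_i\ge R_g+1$, and $\partial_\nu v_i=0$ on $\partial M\cap B_i$. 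Hence $\bar u:=\min_i v_i$ is Lipschitz, lies in $[1,1+4\varepsilon]$, satisfies $\partial_\nu\bar u=0$ on $\partial M$, and is a viscosity supersolution of $L_g w=R_g+1$. Next, exhaust $M$ by compact pieces $M_k\nearrow M$ with $\partial M_k=(\partial M\cap M_k)\cup\Gamma_k$, $\Gamma_k$ smooth, and solve the linear mixed problem $L_g u_k=R_g+1$ in $M_k$, $\partial_\nu u_k=0$ on $\partial M\cap M_k$, $u_k=1$ on $\Gamma_k$. Since $R_g\ge0$, the maximum principle gives $u_k>0$; the constant $\tfrac12$ is a subsolution with $\tfrac12\le u_k$ on $\Gamma_k$, so $u_k\ge\tfrac12$; and $\bar u$ is a supersolution with $\bar u\ge1=u_k$ on $\Gamma_k$ and matching Neumann data, so $u_k\le\bar u\le1+4\varepsilon$. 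These uniform bounds and elliptic estimates let a subsequence converge in $C^2_{\mathrm{loc}}$ to $u\in C^\infty(M)$ with $L_g u=R_g+1>0$, $\partial_\nu u=0$, $\tfrac12\le u\le1+4\varepsilon$, giving (a)–(c). (When $M$ is compact with $R_g\not\equiv0$ one can bypass the exhaustion: $R_g\ge0$, $R_g\not\equiv0$ makes the first (Neumann) eigenvalue of $L_g$ positive, so $L_g u=R_g+1$ is uniquely solvable, its solution positive by the maximum principle and bounded by compactness.)

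The remaining case — $M$ compact, $R_g\equiv0$ — is where $\Ric_g\not\equiv0$ enters. Choose an interior point $p$ with $\Ric_g(p)\neq0$ (possible since the interior is dense) and a cutoff $\chi\ge0$, $\chi(p)=1$, supported in a small interior ball. For small $s>0$, $g_s:=g-s\chi\Ric_g$ is a metric uniformly equivalent to $g$, equal to $g$ near $\partial M$ (so $H_{g_s}=H_g$, and $\partial M$ stays minimal if it was). Using $R_g\equiv0$, $\Div_g\Ric_g=\tfrac12 dR_g=0$ and the contracted Bianchi identity, the linearized scalar curvature at $g$ in the direction $h=-\chi\Ric_g$ computes to $\dot R=\chi|\Ric_g|^2-\langle\Hess_g\chi,\,\Ric_g\rangle$, with $\int_M\dot R=\int_M\chi|\Ric_g|^2>0$. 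From this I would show that the first (Neumann) eigenvalue $\lambda_1(L_{g_s})$ is positive for all small $s$, via a quantitative Rayleigh‑quotient estimate: decompose a normalized test function into its mean and its mean‑zero part, bound the latter's contribution from below using the first positive eigenvalue $\mu_1>0$ of $-\Delta_g$ (with Neumann conditions if $\partial M\neq\emptyset$), and observe that on nearly‑constant functions $\int R_{g_s}u^2\approx s\int_M\dot R>0$. Letting $\varphi_1>0$ be the first eigenfunction (positive by the Hopf/Krein--Rutman argument, smooth, bounded away from $0$ and $\infty$ on the compact $M$), the metric $\hat g:=\varphi_1^{4/(n-2)}g_s$ has $R_{\hat g}=\lambda_1\varphi_1^{-4/(n-2)}>0$, $H_{\hat g}=\varphi_1^{-2/(n-2)}H_{g_s}$, and is uniformly equivalent to $g_s$, hence to $g$ (completeness being vacuous here).

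I expect two main obstacles. First, the a priori bound $u_k\le\bar u$ on a manifold with no control of the geometry at infinity: it hinges on having \emph{uniformly} bounded explicit local supersolutions $v_i$, and on the fact that a locally finite minimum of supersolutions of the degenerate‑elliptic operator $L_g$ (which has the right sign because $R_g\ge0$) is again a viscosity supersolution, so that the classical comparison principle applies against $u_k$; respecting the Neumann condition is precisely what forces the even‑in‑$x_n$ choice of $v_i$ near $\partial M$. Second, positivity of $\lambda_1(L_{g_s})$ in the compact scalar‑flat case, where the compactly supported Ricci deformation makes $\dot R$ sign‑indefinite (through the $\langle\Hess_g\chi,\Ric_g\rangle$ term) so that positivity of the eigenvalue must be squeezed out of positivity of $\int_M\dot R$ alone by the eigenvalue‑gap estimate.
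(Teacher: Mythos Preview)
Your barrier construction in the noncompact case has a genuine gap that cannot be repaired within your framework. The function $\bar u=\min_i v_i$, with each $v_i$ defined only on its own ball $B_i$, is \emph{not} a viscosity supersolution of $L_g w=R_g+1$: as you approach $\partial B_i$ from inside, $v_i\searrow 1$ while the other $v_j$ covering that point remain strictly above $1$, so $\bar u$ is not even lower semicontinuous across $\partial B_i$. The slogan ``a minimum of supersolutions is a supersolution'' requires all of them to be defined on a common open set; extending $v_i$ by $1$ outside $B_i$ destroys the inequality there, since $L_g(1)=R_g\not\ge R_g+1$.

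The obstruction is structural, not merely technical. Wherever $R_g\equiv 0$ on an unbounded region, your scheme asks for a \emph{bounded} $w$ with $-\Delta_g w\ge c_0>0$, and no such function exists on a Euclidean end: comparing with $-\tfrac{c_0}{2n}|x|^2$ on balls of radius $R\to\infty$ forces $w\to+\infty$. Concretely, on flat $\mathbb R^n$ (or $\mathbb R^n_+$ with Neumann, by reflection) your $u_k$ on $M_k=B_k$ has $u_k(0)=1+\tfrac{n-2}{8n(n-1)}\,k^2\to\infty$, directly contradicting $u_k\le 1+4\varepsilon$. So the claim that the noncompact case goes through ``using only $R_g\ge 0$ and $H_g\ge 0$'' is false; the hypothesis $\Ric_g\not\equiv 0$ is needed there as well. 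The paper handles the noncompact scalar-flat case by first perturbing $g$ along $-\chi\,\Ric_g$ (interior cutoff) to make the first Neumann eigenvalue $\mu_1(\Omega_0)$ of the conformal Laplacian positive on a bounded $\Omega_0$, and only then running a variational/exhaustion argument (its Theorem~\ref{solution1}) that produces a bounded positive $u$ with $L_gu>0$ and $\partial_\nu u=0$ without ever requiring a globally uniformly positive right-hand side.
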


A consequence of the proof of Theorem \ref{main1} is that it allows us to address other deformation scenarios. Specifically, we prove that: If the Ricci tensor is not identically zero, we can deform the metric while increasing the mean curvature of the boundary (Corollary \ref{main2}), and if the second fundamental form of the boundary, with respect to the unit outer normal, is not identically zero, we can also deform the metric to increase the mean curvature of the boundary (Theorem \ref{increase_mean}).

The proof of Theorem \ref{main1} employs variational techniques and reduces the problem to a bounded domain. Crucial to this approach is the assumption on the Ricci curvature, which allows us to deform the initial metric g into a nearby metric that coincides with $g$ at infinity and exhibits desirable properties. This new metric is then further deformed pointwise conformally, leveraging the characterization of the eigenvalues of the conformal Laplacian subject to boundary conditions. It should be noted that analogous deformations in compact manifolds with boundary have been previously explored in \cite{MR4728702,MR3977557,MR4730349}. We anticipate that the Kazdan-type deformation results established in Theorem \ref{main1} will have broader applications in other settings.



In \cite{MR4713021}, O. Chodosh and C. Li proved that for any $n$-manifold $X$ with $n\leq 7,$ the connected sum $\mathbb T^n\# X$ does not admit a complete metric of positive scalar curvature. Furthermore, the only complete metric of nonnegative scalar curvature on $\mathbb T^n\# X$ is flat. This result resolves an important question posed in the work of R. Schoen and S. T. Yau on  locally conformally flat manifolds \cite{MR931204}. A similar question arises in the context of manifolds with boundary. In this setting, Theorem \ref{main1}, together with the celebrated splitting theorem of C. Croke and B. Kleiner \cite[Theorem 1]{MR1181314}, immediately yields the following corollary.


\begin{corollary}\label{cor1}
Let $X$ be an $n$-dimensional manifold (compact or noncompact), where $n\geq 3$. Suppose that $g$ is a complete metric on $M = \mathbb T^{n-1}\times [0,1]\# X$ with nonnegative scalar curvature and mean convex boundary. Then, one of the following holds:
\begin{itemize}
    \item[i)]  $M$ admits a complete Riemannian metric with positive scalar curvature and mean convex boundary; or
    \item[ii)]   $M$ is compact, flat, and has totally geodesic boundary.
\end{itemize}
\end{corollary}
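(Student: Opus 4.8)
The plan is to feed $(M,g)$ into Theorem~\ref{main1} and then handle the single exceptional branch it leaves open with the Croke--Kleiner theorem.

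\emph{Step 1 (the generic branch).} Apply Theorem~\ref{main1} to $(M,g)$: its hypotheses hold because $n\ge 3$, the scalar curvature of $g$ is nonnegative, and $\partial M$ is mean convex. If $\Ric_g\not\equiv 0$, Theorem~\ref{main1} yields a metric $\hat g$ on $M$, uniformly equivalent to $g$, whose scalar curvature is positive everywhere and for which $\partial M$ is still mean convex; and since $g$ is complete, the final assertion of Theorem~\ref{main1} gives that $\hat g$ is complete. Thus $\hat g$ is exactly the metric required in alternative~(i), and we are done in this case.

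\emph{Step 2 (the Ricci-flat branch).} It remains to treat $\Ric_g\equiv 0$, so $(M,g)$ is Ricci-flat, hence scalar-flat, with compact mean-convex boundary $\partial M=\mathbb T^{n-1}\sqcup\mathbb T^{n-1}$. Now one uses the topology of $M=\mathbb T^{n-1}\times[0,1]\# X$: collapsing the connected summand $X$ gives a degree-one map $M\to\mathbb T^{n-1}\times[0,1]$; equivalently $\pi_1(M)$ surjects onto $\mathbb Z^{n-1}$, and $b_1(M)=n-1+b_1(X)\ge n-1$. With these inputs we invoke \cite[Theorem~1]{MR1181314}. Its noncompact alternative would present $M$ as a warped product running off to infinity over a single boundary component, which is incompatible with $M$ being connected with a two-component compact boundary; hence $M$ is compact. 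For compact $(M,g)$, Ricci-flatness together with $b_1(M)\ge n-1$ triggers the equality/rigidity case of the Bochner--Croke--Kleiner estimate — this is precisely where mean-convexity of $\partial M$ enters, supplying the boundary term with the sign needed to force harmonic one-forms to be parallel — and one concludes that $(M,g)$ is flat with totally geodesic boundary. This is alternative~(ii).

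\emph{Expected main obstacle.} Step~1 is routine once Theorem~\ref{main1} is in hand (the only points to check are that $\hat g$ inherits completeness from $g$, which is part of the statement, and that conditions (i)--(ii) of Theorem~\ref{main1} literally are the conditions in alternative~(i)). The real work is Step~2, and the delicate point is extracting from \cite[Theorem~1]{MR1181314} not merely compactness of $M$ but the full rigidity: flatness and total geodesy of $\partial M$ from $\Ric_g\equiv 0$ and $b_1(M)\ge n-1$. Concretely, one must (a) rule out the noncompact warped-product alternative using the disconnectedness of $\partial M$, and (b) invoke the equality case of the associated Betti-number estimate; modulo quoting the Croke--Kleiner theorem in this sharp form, the deduction from Theorem~\ref{main1} is immediate. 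The remaining ingredient is only elementary algebraic topology: the surjection $\pi_1(M)\twoheadrightarrow\mathbb Z^{n-1}$ and the identification $\partial M=\mathbb T^{n-1}\sqcup\mathbb T^{n-1}$, both of which follow from the interior connected-sum decomposition of $M$.
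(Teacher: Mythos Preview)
Your overall strategy is exactly the paper's: Theorem~\ref{main1} disposes of the case $\Ric_g\not\equiv0$, and the Ricci-flat case is handed to \cite[Theorem~1]{MR1181314}. The paper offers no argument beyond naming these two ingredients, so in outline you match it, and Step~1 is correct as written.

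Two points in Step~2 need tightening. First, the identification $\partial M=\mathbb T^{n-1}\sqcup\mathbb T^{n-1}$ and its compactness presuppose that $X$ has empty boundary, which the hypothesis does not stipulate; in any case the two torus components are always present, and it is one of \emph{them} that feeds into Croke--Kleiner. Second, and more substantively, \cite[Theorem~1]{MR1181314} is a \emph{splitting} theorem, not a Betti-number inequality: under $\Ric\ge0$ with compact mean-convex boundary it produces an isometric product structure $M\cong N\times[0,a]$ directly (with $N$ a boundary component, here $\mathbb T^{n-1}$), after which Ricci-flatness forces the torus factor to be flat and alternative~(ii) follows. Your computation $b_1(M)\ge n-1$ is correct, but it is not the hypothesis that theorem takes, and the ``equality case of the Bochner--Croke--Kleiner estimate'' you invoke is not a statement appearing in that paper. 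Unless you supply an independent reference for a rigidity result of the form ``compact, Ricci-flat, mean-convex boundary, $b_1\ge n-1$ $\Rightarrow$ flat with totally geodesic boundary'', that sub-step is a gap; the fix is simply to invoke the splitting theorem in the form it is actually stated.
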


We believe that, analogous to the results in \cite{MR4713021}, the first possibility above can be ruled out.

By combining Theorem \ref{main1} with techniques from the theory of stable minimal surfaces, we obtain the following trichotomy result.

\begin{theorem}
    \label{trichotomy}
Let $\left(M^{3}, g\right)$ be a complete, connected, orientable Riemannian 3-manifold with a noncompact boundary. Assume that $g$ has non-negative scalar curvature and each boundary component is a minimal surface  with respect to $g.$ Then:
\begin{itemize}
    \item[(a)] If $Ric_{g}\not\equiv 0$, then there exists a small isotopic smooth perturbation $g_{1}$ of $g$, such that $R_{g_{1}}>0$ and $H_{g_{1}}\equiv 0$.
    \item[(b)]  If $Ric_{g}\equiv 0$ and $\partial M$ is not totally geodesic, then there exists a small isotopic smooth perturbation $g_{1}$ of $g$, such that  $R_{g_{1}}\equiv 0$ and every connected component of $\partial M$ that is not totally geodesic becomes strictly mean convex with respect to the outward unit normal vector field in $g_{1}$ (while the other ones are kept totally geodesic);
    \item[(c)]  If $R i c_{g}\equiv 0$ and all components of $\partial M$ are totally geodesic, then $(M, g)$ is isometric to a Riemannian product $\partial M \times  I$ equipped with a flat metric, where $I \subset \mathbb{R}$ is a  interval and  $\partial M$ is a complete flat surface.
\end{itemize}    
\end{theorem}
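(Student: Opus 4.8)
The three cases are handled separately, and the first two are quick consequences of the deformation results already established. For (a), note that $(M^3,g)$ satisfies all the hypotheses of Theorem~\ref{main1}: it is complete, $R_g\ge 0$, its boundary is mean convex (in fact $g$-minimal), and $\mathrm{Ric}_g\not\equiv 0$. Theorem~\ref{main1} then supplies a complete metric $\hat g$, uniformly equivalent to $g$, with $R_{\hat g}>0$ and with $\partial M$ remaining $\hat g$-minimal because it is $g$-minimal, i.e.\ $H_{\hat g}\equiv 0$. To obtain the stated ``small isotopic perturbation'' one inspects the construction in the proof of Theorem~\ref{main1}: the metric is first perturbed only inside a fixed relatively compact set and by an amount that can be taken arbitrarily small, and is then multiplied by a conformal factor that can be taken arbitrarily $C^2$-close to $1$; hence $\hat g$ is joined to $g$ through a smooth path of metrics supported in a fixed relatively compact set, and is as close to $g$ as desired. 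Setting $g_1=\hat g$ proves (a).

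For (b), since $\dim M=3$ the condition $\mathrm{Ric}_g\equiv 0$ forces $g$ to be flat, and in particular $R_g\equiv 0$. As $\partial M$ is not totally geodesic, its second fundamental form with respect to the outward unit normal is not identically zero, so Theorem~\ref{increase_mean} applies and produces a small conformal deformation $g_1=u^{4/(n-2)}g$ (with $u$ close to $1$, hence again a small isotopic perturbation) satisfying $R_{g_1}\equiv 0$ and with the boundary mean curvature increased. The key point is the boundary behavior: $u$ is obtained by solving the conformally invariant equation $L_gu=0$ in the interior together with the Neumann condition $\partial_\nu u=0$ on every component that is already totally geodesic; since such a component has $H_g\equiv 0$ and vanishing second fundamental form, it stays totally geodesic with respect to $g_1$, while on each non-totally-geodesic component the deformation of Theorem~\ref{increase_mean} can be arranged so that $H_{g_1}>0$ with respect to the outward normal. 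This is exactly assertion (b).

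For (c) we have $\mathrm{Ric}_g\equiv 0$ with every boundary component totally geodesic, so once more $g$ is flat. Fix a boundary component $\Sigma$. By the Gauss equation (ambient curvature zero, second fundamental form zero) $\Sigma$ is intrinsically flat; it is complete, being closed in the complete manifold $M$, and by hypothesis noncompact, hence a complete noncompact flat surface. Now run the normal exponential map $\Phi(\sigma,t)=\exp_\sigma(t\nu)$ from $\Sigma$ inward. The shape operators of the equidistant hypersurfaces satisfy a Riccati equation whose zeroth-order term is the ambient sectional curvature in the normal direction; flatness makes this term vanish, and with the initial condition ``totally geodesic'' this forces the shape operator to be identically zero along every normal geodesic (this is precisely the infinitesimal rigidity of $\Sigma$ viewed as a stable minimal surface in a space with $R_g\ge 0$). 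Consequently $\Sigma$ has no focal points, every equidistant hypersurface is totally geodesic and isometric to $\Sigma$, and in these Fermi coordinates the metric reads $g=dt^2+g_\Sigma$. Thus $\Phi$ is a local isometry from the Riemannian product $\Sigma\times I$ onto $M$, where $I\subset\mathbb R$ is an interval which is closed because $M$ is complete, i.e.\ $I=[0,L]$ or $I=[0,\infty)$.

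It remains to upgrade this local isometry to a global one, and this is where I expect the real work to be. Since $\Sigma\times I$ is complete and $M$ is connected, $\Phi$ is a Riemannian covering map; and since $\Phi$ restricts on $\Sigma\times\{0\}$ to the inclusion $\Sigma\hookrightarrow\partial M$, it has degree one and is therefore a global isometry $M\cong\Sigma\times I$ with $\Sigma$ a complete flat surface --- which is conclusion (c). This last step genuinely uses the hypotheses ``orientable'' and ``$\partial M$ noncompact'': without them $M$ could be a flat twisted $I$-bundle over a non-orientable flat surface (for instance the flat twisted bundle over a Klein bottle, which fails our hypotheses only in that its boundary is the compact torus), and $\Phi$ would be a nontrivial covering. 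Excluding this --- say, by passing to the double $M\cup_{\partial M}M$, a complete orientable flat $3$-manifold in which $\partial M$ becomes an embedded totally geodesic surface lifting to a family of parallel planes in $\mathbb R^3$, and then arguing that orientability together with the noncompactness of these planes forces the deck group to act trivially in the direction transverse to the planes --- is the main obstacle; a minor additional point is extracting the quantitative ``small and isotopic'' refinement in (a) and (b) from the constructions underlying Theorem~\ref{main1} and Theorem~\ref{increase_mean}.
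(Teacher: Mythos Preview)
Your treatment of (a) matches the paper's. The real gap is in (b).

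You invoke Theorem~\ref{increase_mean} to produce $g_1$ with $R_{g_1}\equiv 0$, but Theorem~\ref{increase_mean} does not deliver this: its conclusion~(i) is only $R_{\hat g}\ge 0$. Inspecting the proof, the construction first perturbs $g$ to $g(t)=g+th$ with $h$ supported near $\partial_M\Omega_0$---this already destroys flatness and hence scalar-flatness---and then applies the conformal factor coming from Theorem~\ref{solution1}(a), which only gives $-\Delta_g u+fu\ge 0$, not equality (see the Remark following Theorem~\ref{solution1}). So neither step preserves $R\equiv 0$, and the resulting metric is not even conformal to $g$, contrary to what you write. Your attempted repair, ``solve $L_gu=0$ with Neumann data on the totally geodesic components,'' is not furnished by any result in the paper: Lemma~\ref{lemma1}(a) does give $-\Delta_gu+fu=0$, but without the uniform lower bound $u\ge c>0$ needed for a small (or complete) perturbation, and Theorem~\ref{solution1} regains that bound precisely by replacing $u$ with $1-e^{-cv}$, thereby abandoning the equality. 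Finally, Theorem~\ref{increase_mean} makes \emph{all} of $\partial M$ strictly mean convex, so it cannot keep the totally geodesic components totally geodesic as (b) requires.

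The paper's argument for (b) is entirely different and sidesteps the issue by not deforming the metric tensor in a curvature-changing way. It first shows, via a Fischer-Colbrie--Schoen logarithmic cutoff estimate, that a non-totally-geodesic minimal component $\partial M_1$ in a Ricci-flat $3$-manifold is \emph{unstable}; it then takes a positive Jacobi eigenfunction $\phi$ with eigenvalue $\lambda<0$, flows $\partial M_1$ by $X=\phi\eta$, and observes that $\partial_t H|_{t=0}=-\lambda\phi>0$. The new metric is $g_1=F_t^*g$, a pull-back of the flat $g$ by a diffeomorphism, so $R_{g_1}\equiv 0$ is automatic. This instability-plus-flow mechanism is the missing idea in your proposal.

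For (c) the paper takes a shorter route than yours: it doubles $M$ across its totally geodesic boundary, obtains a smooth complete flat $3$-manifold, identifies the double with $\mathbb R^3$, and concludes that each boundary component is a plane, forcing $M$ to be a half-space or a slab. Your Fermi-coordinate/Riccati argument is a reasonable alternative and has the virtue of making explicit where orientability and noncompactness of $\partial M$ enter; as you correctly flag, the passage from local to global isometry is the point that needs care.
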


The second part of this work focuses on the construction of conformal deformations of complete metrics with negative scalar curvature on manifolds with noncompact boundary.


\begin{theorem}\label{teo_constan}
Let $(M, g)$ be a complete Riemannian manifold with noncompact boundary and scalar curvature satisfying 
$$
R_g\leq-c< 0\;\mbox{ on }\;M,$$
for some positive constant $c.$ 
Let $f$ and $h$ be bounded smooth functions on $M$ and $\partial M$, respectively, with $f<0$. Assume that one of the following conditions holds:
\begin{itemize}
    \item[(a)] $h=0$ and the boundary is minimal; or 
    \item[(b)]  $h>0$ and the mean curvature  of the boundary satisfies a  pinching condition 
    $$
    0< c_1\leq H_g\leq c_2,
    $$ 
    for some positive constants $c_1$ and $c_2$. 
\end{itemize}
Then, there exists a complete conformal metric on M with scalar curvature equal to $f$ and boundary mean curvature equal to $h$.
\end{theorem}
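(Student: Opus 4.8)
The plan is to reduce the statement, via the conformal change of metric, to a semilinear elliptic boundary value problem of Yamabe type, and to solve it by the method of sub- and supersolutions carried out on an exhaustion of $M$ by relatively compact domains. Writing $\hat g = u^{4/(n-2)}g$ with $u>0$, the requirements $R_{\hat g}=f$ and $H_{\hat g}=h$ become
\begin{equation*}
-\frac{4(n-1)}{n-2}\,\Delta_g u + R_g\,u = f\,u^{\frac{n+2}{n-2}}\quad\text{in }M,\qquad
\frac{2}{n-2}\,\partial_\nu u + H_g\,u = h\,u^{\frac{n}{n-2}}\quad\text{on }\partial M ,
\end{equation*}
$\nu$ being the outward unit normal of $g$. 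Since $g$ is complete, if $u$ solves this system with $u\ge c_0>0$ for some constant $c_0$, then $\hat g\ge c_0^{4/(n-2)}g$ is again complete; thus it suffices to produce a positive solution bounded away from zero.

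\emph{Barriers.} The sign $R_g\le -c<0$ together with $\|f\|_\infty<\infty$ makes a small constant $\underline u\equiv\varepsilon_0$, with $\varepsilon_0^{4/(n-2)}\le c/(2\|f\|_\infty)$, a subsolution of the interior equation with room to spare, and a large constant a supersolution on any relatively compact domain. In case (a) one has $h\equiv H_g\equiv 0$, so these constants also satisfy the boundary condition and serve as global barriers (the supersolution on all of $M$ being either a large constant, if $R_g$ is bounded, or a function obtained by gluing, via maxima, local constant supersolutions over a compact exhaustion). In case (b) constants no longer satisfy the boundary inequalities — the Robin condition with $h>0$ forces the barriers to be non-constant in a collar neighborhood $\partial M\times[0,\tau)$, graded so that the normal-derivative term carries the appropriate sign and magnitude while the interior inequality is preserved; it is precisely here that the pinching $0<c_1\le H_g\le c_2$ enters, controlling the Robin coefficient and the mean curvature of the level sets of the distance to $\partial M$, so that the boundary term can be balanced against the collar contribution of $\Delta_g$.

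\emph{Exhaustion, iteration and passage to the limit.} Fix a smooth exhaustion $\Omega_1\Subset\Omega_2\Subset\cdots$ with $\bigcup_k\Omega_k=M$ and $\partial\Omega_k=\Sigma_k\cup\Gamma_k$, where $\Sigma_k=\partial M\cap\overline{\Omega_k}$ and $\Gamma_k$ is interior to $M$. On each $\Omega_k$ solve the mixed problem consisting of the semilinear equation in $\Omega_k$, the nonlinear Robin condition on $\Sigma_k$, and the Dirichlet condition $u=\underline u$ on $\Gamma_k$; since $\underline u\le\overline u$ form a sub/supersolution pair, the monotone iteration scheme yields a solution $u_k$ with $\underline u\le u_k\le\overline u$. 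Taking the minimal such solution and comparing on $\Omega_k\Subset\Omega_{k'}$ — where $u_{k'}$ restricted to $\Omega_k$ is a supersolution dominating $\underline u$ on $\Gamma_k$ — shows that $(u_k)$ is nondecreasing on overlaps. On any fixed compact set the bound $\underline u\le u_k\le\overline u$ controls the nonlinear terms, so interior and boundary Schauder estimates give $C^{2,\alpha}_{\mathrm{loc}}$ bounds independent of $k$; hence $u_k\to u$ in $C^2_{\mathrm{loc}}$, and $u$ solves the boundary value problem on all of $M$ with $\underline u\le u\le\overline u$. In particular $u\ge\inf_M\underline u>0$, so $\hat g=u^{4/(n-2)}g$ is the desired complete conformal metric with scalar curvature $f$ and boundary mean curvature $h$.

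\emph{Main obstacle.} The heart of the argument is the barrier construction in case (b): the uniform negativity $R_g\le -c<0$ pushes the conformal factor to be comparatively large in the interior (and to grow where $-R_g$ does), while $h>0$ pulls it in the opposite direction near $\partial M$, so that no constant pair works; reconciling these competing tendencies through a collar construction, uniformly out to infinity, is the delicate step, and is exactly what the mean curvature pinching — absent in case (a), where the boundary condition is homogeneous — is designed to make possible. A secondary technical point is producing a genuine global supersolution, finite everywhere though possibly unbounded, when $R_g$ is not bounded below, which is handled by patching local constant supersolutions.
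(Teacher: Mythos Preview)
Your overall architecture—conformal reformulation, sub/supersolutions on a compact exhaustion with mixed Robin/Dirichlet data, elliptic estimates, and passage to the limit with completeness coming from a uniform positive lower barrier—is exactly the paper's. The two substantive divergences are precisely at the points you flag as delicate, and in both the paper proceeds differently.

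\emph{Barriers in case (b).} The paper does \emph{not} use a collar construction: it takes \emph{constant} sub- and supersolutions $0<u_-^k<1<u_+^k$ on each $\Omega_k$, in both cases (a) and (b), and checks that the algebraic inequalities a constant must satisfy (of the form $(u_-^k)^{4/(n-2)}\inf_{\Omega_k}f\ge\sup_{\Omega_k}R_g$, $(u_-^k)^{2/(n-2)}\sup_{\Omega_k}h\le\inf_{\Omega_k}H_g$, and the reversed pair for $u_+^k$) are simultaneously solvable thanks to $R_g\le -c$, boundedness of $f$ and $h$, and the pinching of $H_g$. The key output is that $u_-^k=u_-$ can be chosen \emph{independently of $k$}, yielding $u\ge u_->0$ and hence completeness. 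You instead assert that constants fail and gesture at a collar modification that you never construct; since you yourself label this ``the heart of the argument'' and ``the delicate step,'' your proposal is incomplete at its self-declared crux. Before committing to the harder route, it is worth re-examining whether constants genuinely fail for the boundary inequality under the stated sign hypotheses—this is exactly what the pinching is designed to arrange.

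\emph{Upper bound.} You seek a global (possibly unbounded) supersolution by patching local constant supersolutions via maxima. The paper avoids any global supersolution: the $u_+^k$ are allowed to depend on $k$, and the uniform-on-compacta upper bound needed for $C^{2,\alpha}$ compactness comes instead from an a priori estimate (Lemma~\ref{estimate_sub}) which bounds any nonnegative solution of the system on a fixed compact $X\subset\Omega$ purely in terms of the data on a slightly larger domain. This is what makes the diagonal-subsequence extraction go through without a global barrier and without any assumption on the growth of $-R_g$. Your patching sketch is plausible but vague, and in any case unnecessary once the a priori estimate is in hand.

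Your monotone-minimal-solution device for passing to the limit is a legitimate alternative to the paper's diagonal extraction, but it presupposes the barrier pair you have not constructed.
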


\begin{remark}
This result extends the classical result of P. Aviles and R. C. McOwen \cite{MR925121} to the setting of manifolds with boundary, where the prescribed functions are not necessarily constant.

\end{remark}




The proof of Theorem \ref{teo_constan} relies on an iterative process utilizing sub- and super-solutions. This approach establishes the existence of a positive solution to a nonlinear elliptic equation involving the critical Sobolev exponent, subject to linear and nonlinear boundary conditions. This method enables the modification of both the scalar curvature and the mean curvature of the boundary, allowing for both increases and decreases.

 
We conclude this introduction with an outline of the paper's structure. In Section \ref{sec005}, we introduce key concepts and establish technical lemmas concerning the existence of solutions to certain PDEs with appropriate boundary conditions. In Section \ref{deformation}, we prove Theorem \ref{main1} and Theorem \ref{trichotomy}, and explore their consequences. In Section \ref{negative_section}, we construct complete conformal metrics with negative scalar curvature, considering various conditions on the mean curvature of the boundary.
    


\section{Solution to a Geometric PDE}\label{sec005}


Let $(M^n,g)$ be an $n$-dimensional Riemannian manifold with nonempty boundary $\partial M$, where $n\geq 3$. Suppose  $\overline g$ is a metric conformally related to $g,$ with scalar curvature $R_{\overline g}$ and mean curvature $H_{\overline g}$. Then, there exists a positive smooth function $u$ such that  $\overline g=u^{\frac{4}{n-2}}g$, and their curvatures are related by the following transformation laws:
\begin{equation}\label{confchange}
\begin{cases}
-\dfrac{4(n-1)}{n-2}\Delta_g u+R_{g} u=R_{\bar g}u^{\frac{n+2}{n-2}} & \text { in } M, \\ 
\dfrac{2}{n-2} \dfrac{\partial u}{\partial \eta}+H_{g} u=H_{\bar g}u^{\frac{n}{n-2}} & \text { on } \partial M,\end{cases}
\end{equation}
 where $R_g$ and $H_g$ denote the scalar and mean curvatures of $g$, respectively, and $\eta$ is  the outward unit normal on $\partial M$. 

 The conformal Laplacian and conformal Neumann operator are defined, respectively, as $ L_g=-\frac{4(n-1)}{n-2}\Delta_g +R_{g}$   and $ B_g=\frac{2}{n-2}\frac{\partial}{\partial \eta}+H_{g}$. Note that if $u > 0$ satisfies $L_gu>0$ and $B_gu=0,$ then the metric $u^{\frac{4}{n-2}}g$  has positive scalar curvature and zero mean curvature on the boundary. Similarly,  if $L_gu=0$ and $B_gu>0,$ then $u^{\frac{4}{n-2}}g$  has zero scalar curvature and positive mean curvature on the boundary. 

In the first part of this section, we explore an initial conformal deformation using a more general approach. Instead of focusing on the conformal Laplacian and its Neumann operator, we work with a Schrödinger operator subject to specific boundary conditions.


\subsection{Positive solution of a geometric PDE}\label{sec001}

We will analyze both cases: when the boundary $\partial M$ is noncompact and when it is compact.

If $M$ has a noncompact boundary, we consider an exhaustion of $M$ by a sequence of bounded open sets $\{\Omega_j\}_{j=0}^\infty$ with piecewise smooth boundary. Each $\Omega_j$ satisfies
\begin{itemize}
    \item $\Omega_j\Subset\Omega_{j+1}$ (i.e., $\overline{\Omega}_j$ is a compact subset of $\Omega_{j+1}$)
    \item $\partial\Omega\cap\partial M\not=\emptyset$ (i.e., part of the boundary of each $\Omega_j$  is contained within the boundary of M)
\end{itemize}
For a bounded set $\Omega\subset M$, we distinguish between two parts of its boundary:
\begin{itemize}
    \item Interior boundary: $\partial_0\Omega:=\partial\Omega\cap int(M)$, where $int(M)$ denotes the interior of M.
    \item Boundary component on $\partial M$: $\partial_M\Omega:= \overline{\partial\Omega\backslash\partial_0\Omega}$, representing the closure of the portion of $\partial \Omega$ that lies on $\partial M$.
\end{itemize}
Throughout this section, we denote by $\eta$ the outward unit normal to $ \partial_M\Omega$ (or the boundary $\partial M$) and by $\nu$ the outward unit normal to $ \partial_0 \Omega$.
 
 Given $f\in C^\infty(\Omega)$ and $h\in C^{\infty}(\partial_M \Omega)$, we define the following functional $Q^0_g(u,\Omega)$ for functions $u$ on $\Omega$ satisfying the condition $u\not=0$ on $\partial_M\Omega$:
 $$ Q^0_g(u,\Omega)=\frac{\int_\Omega(|\nabla_g u|^2+fu^2)dv_g+\int_{\partial_M\Omega} h u^2d\sigma_g}{\int_{\partial_M\Omega} u^2d\sigma_g}.
 $$
Also define
\begin{equation}\label{cvarsteklov}
\sigma_1(\Omega)=\inf \{Q^0_g(u,\Omega):\; u\in H^1(\Omega), \; u|_{\partial_M\Omega}\not=0,\; \partial u/\partial \nu=0\mbox{ on }\partial_{0}\Omega\}.
\end{equation}
By variational characterization, $\sigma_1(\Omega)$ is the lowest eigenvalue of the following mixed boundary value problem\footnote{In the special case where f=0 and h=0, this eigenvalue problem is known as the sloshing problem, which describes the behavior and oscillations of ideal fluids in certain domains.}:
\begin{equation*}
\begin{cases}
-\Delta_g u +fu = 0 & \text { in } \Omega, \\ 
\dfrac{\partial u}{\partial \eta} + h u = \sigma u & \text { on } \partial_M \Omega,\\
\dfrac{\partial u}{\partial \nu} = 0 & \text { on } \partial_0 \Omega.
\end{cases}
\end{equation*}

When $f,h\geq0$ and $\sigma_1(\Omega_0)>0,$ then $\sigma_{1}\left(\Omega_{j}\right)>0$, for all $j$. Indeed, if $\sigma_{1}\left(\Omega_{j}\right) \leq 0$, for some $j>0$, then by the variational characterization \eqref{cvarsteklov} and the fact that  $\Omega_0\subset\Omega_j$, any  eigenfunction $\psi$ associated to $\sigma_1(\Omega_j)$ must satisfies
$$
\begin{aligned}
0 & \geq \int_{\Omega_{j}}(|\nabla_g \psi|^{2}+f\psi^2) dv +\int_{\partial_M\Omega_j}h\psi^2d\sigma\\
&\geq \int_{\Omega_{0}}(|\nabla_g \psi|^{2}+f\psi^2)dv +\int_{\partial_M\Omega_0}h\psi^2d\sigma \geq \sigma_{1}\left(\Omega_{0}\right) \int_{\partial_M\Omega_{0}} \psi^{2} dv,
\end{aligned}
$$
which contradicts the fact that $\sigma_{1}\left(\Omega_{0}\right)>0 .$

 
Given a bounded domain $\Omega\subset M$ and $f\in C^\infty(\Omega),$ we can also define the functional $Q_g(u,\Omega)$,
$$
 Q_g(u,\Omega)=\frac{\int_\Omega(|\nabla_g u|^2+fu^2)dv+\int_{\partial_M\Omega}hu^2d\sigma}{\int_{\Omega} u^2d\sigma}.
 $$ 
Also define
\begin{align}
    \mu_1(\Omega)=  \inf \{ & Q_g(u,\Omega): u\in H^1(\Omega)\setminus\{0\},\; \partial u/\partial \eta=0\mbox{ on }\partial_M\Omega\label{cvar}\\
    & \mbox{ and }\partial u/\partial \nu=0\mbox{ on }\partial_0\Omega\}.\nonumber
\end{align}
Again, by variational characterization, $\mu_1(\Omega)$ is the lowest eigenvalue of the following mixed boundary value problem
\begin{equation*}
\begin{cases}
-\Delta_g u+fu = \mu u & \text { in } \Omega, \\ 
\dfrac{\partial u}{\partial \eta}+hu = 0 & \text { on } \partial_M \Omega,\\
\dfrac{\partial u}{\partial \nu} = 0 & \text { on } \partial_0 \Omega.
\end{cases}
\end{equation*}
Similarly, if $f,h\geq0$ and $\mu_1(\Omega_0)>0,$ then $\mu_1(\Omega_j)>0$ for all $j.$

Now, if $M$ has a compact boundary, we consider an exhaustion by open domains $\Omega_{0} \Subset \Omega_{1} \Subset \Omega_{2} \Subset \ldots$  such that each boundary is smooth and $\partial M\subset \Omega_0$. A similar functional $Q_g$ and the corresponding eigenvalue problem can be defined for each $\Omega_j$. In this case 
\begin{equation*}
\mu_1(\Omega)=\inf \{Q_g(u,\Omega): u\in H^1(\Omega)\setminus\{0\}\}.
\end{equation*}

For a formal discussion of an elliptic eigenvalue problem for a Schrödinger operator, we refer the interested reader to Appendix A of \cite{MR4728702}.


Since the case of a compact boundary can be readily adapted, we will henceforth assume that $\partial M$ is noncompact. Our proof involves the eigenvalues $\mu_1(\Omega)$ and  $\sigma_1(\Omega)$. As the arguments for both are similar, we will primarily focus on $\sigma_1(\Omega)$, providing only brief outlines of the corresponding arguments for $\mu_1(\Omega)$ when necessary.


We will now present a technical lemma that will be essential for our subsequent arguments.
\begin{lemma}\label{lemma2}
Let $(M,g)$ be a noncompact Riemannian manifold with nonempty boundary. Let $h\in C^\infty(\partial M)$ and $f\in C^\infty(M)$ be such that $f \geq 0$ in $M\setminus\Omega_{0}$ and $h \geq 0$ in $\partial M\setminus\partial_M\Omega_{0}$.
\begin{itemize}
    \item[(a)] If $\sigma_{1}\left(\Omega_{0}\right)>0$, then there exists a positive function $q\in C^{\infty}(\partial M)$ such that \begin{equation}\label{eq038}
    \int_{\partial_M \Omega_{j}} q v^{2}d\sigma \leq \int_{\Omega_{j}}\left(|\nabla_g v|^{2}+f v^{2}\right) dv+\int_{\partial_M\Omega_j}hv^2d\sigma,
\end{equation}
for any $j=0,1,2, \ldots$, and all $v \in C^{\infty}\left(\Omega_{j}\right).$
 \item[(b)]  If $\mu_{1}\left(\Omega_{0}\right)>0$, then there exists a  positive function $p\in C^{\infty}(M)$ such that 
$$
\int_{\Omega_{j}} p v^{2} dv \leq \int_{\Omega_{j}}\left(|\nabla_g v|^{2}+f v^{2}\right) dv+\int_{\partial_M\Omega_j}hv^2d\sigma,
$$ 
\end{itemize}
for any $j=0,1,2, \ldots$, and all $v \in C^{\infty}\left(\Omega_{j}\right).$
\end{lemma}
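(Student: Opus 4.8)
The plan is to construct the weight functions $q$ (for part (a)) and $p$ (for part (b)) as suitably small positive functions that are supported — or at least genuinely positive — only inside $\Omega_0$, exploiting the positivity of $\sigma_1(\Omega_0)$ (resp. $\mu_1(\Omega_0)$) and the sign conditions on $f$ and $h$ outside $\Omega_0$. I will carry out the details for (a); the argument for (b) is parallel, replacing the boundary integral $\int_{\partial_M\Omega}u^2\,d\sigma$ in the denominator by the volume integral $\int_\Omega u^2\,dv$ and using $\mu_1$ in place of $\sigma_1$.

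First, I would record the key monotonicity and localization facts. For any $v\in C^\infty(\Omega_j)$ with $j\geq 1$, split $\Omega_j = \Omega_0 \cup (\Omega_j\setminus\Omega_0)$ and $\partial_M\Omega_j = \partial_M\Omega_0 \cup (\partial_M\Omega_j\setminus\partial_M\Omega_0)$. Because $f\geq 0$ on $M\setminus\Omega_0$ and $h\geq 0$ on $\partial M\setminus\partial_M\Omega_0$, the contributions of the ``outer'' pieces to the right-hand side of \eqref{eq038} are nonnegative; hence it suffices to prove
\[
\int_{\partial_M\Omega_0} q\,v^2\,d\sigma \;\leq\; \int_{\Omega_0}\bigl(|\nabla_g v|^2 + f v^2\bigr)\,dv + \int_{\partial_M\Omega_0} h\,v^2\,d\sigma
\]
for all $v\in C^\infty(\Omega_0)$ (restricting $v$ to $\Omega_0$ only decreases both the Dirichlet energy and, in the range where $f,h$ could be negative, is exactly the region we keep). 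Now choose $q$ to be any smooth positive function on $\partial M$ with $q \leq \sigma_1(\Omega_0)$ everywhere on $\partial_M\Omega_0$; for instance take $q \equiv \tfrac12 \sigma_1(\Omega_0)$ near $\partial_M\Omega_0$ and extend to a smooth positive function on all of $\partial M$ that is no larger than $\tfrac12\sigma_1(\Omega_0)$ there. Then by the variational characterization \eqref{cvarsteklov},
\[
\int_{\partial_M\Omega_0} q\,v^2\,d\sigma \;\leq\; \sigma_1(\Omega_0)\int_{\partial_M\Omega_0} v^2\,d\sigma \;\leq\; \int_{\Omega_0}\bigl(|\nabla_g v|^2 + f v^2\bigr)\,dv + \int_{\partial_M\Omega_0} h\,v^2\,d\sigma,
\]
where the last inequality is just $Q^0_g(v,\Omega_0)\geq \sigma_1(\Omega_0)$ applied to $v|_{\Omega_0}$ (valid for all $v$, not merely those with $\partial v/\partial\nu=0$, since the infimum \eqref{cvarsteklov} is over a restricted class and we need the reverse — so I should instead invoke the standard fact that $\sigma_1(\Omega_0)$ is the bottom of the Neumann-type spectrum and the Poincaré–Steklov inequality $\int_{\partial_M\Omega_0}v^2\,d\sigma \leq \sigma_1(\Omega_0)^{-1}\bigl[\int_{\Omega_0}(|\nabla_g v|^2+fv^2) + \int_{\partial_M\Omega_0}hv^2\bigr]$ holds for every $v\in H^1(\Omega_0)$, which follows from the fact that the quadratic form dominates $\sigma_1(\Omega_0)$ times the boundary $L^2$-norm on the whole space by ellipticity/spectral theory).

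The subtle point — and the step I expect to be the main obstacle — is precisely the last parenthetical: one must justify that the inequality $Q^0_g(v,\Omega_0)\geq \sigma_1(\Omega_0)$ holds for \emph{all} $v\in H^1(\Omega_0)$ with $v|_{\partial_M\Omega_0}\neq 0$, even though the infimum in \eqref{cvarsteklov} is taken over the smaller class of functions satisfying the Neumann condition $\partial v/\partial\nu = 0$ on $\partial_0\Omega_0$. This is where the ``variational characterization'' remark in the text does the work: $\sigma_1(\Omega_0)$ is an eigenvalue of the mixed problem, and the natural (form) domain for that eigenvalue problem imposes \emph{no} essential boundary condition on $\partial_0\Omega$ (the condition $\partial v/\partial\nu=0$ there is natural, not essential), so the Rayleigh quotient is in fact minimized over all of $H^1(\Omega_0)$ and the claimed inequality holds universally. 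I would cite Appendix A of \cite{MR4728702} for this. Once this is in hand, the construction above works verbatim, and the analogous choice $p\equiv\tfrac12\mu_1(\Omega_0)$ near $\Omega_0$, extended to a smooth positive function on $M$ bounded by $\tfrac12\mu_1(\Omega_0)$ on $\Omega_0$, together with $Q_g(v,\Omega_0)\geq\mu_1(\Omega_0)$ for all $v\in H^1(\Omega_0)$, settles part (b).
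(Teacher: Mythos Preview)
Your reduction step contains a genuine gap. You argue that because the ``outer'' contributions to the right-hand side of \eqref{eq038} are nonnegative, it suffices to prove
\[
\int_{\partial_M\Omega_0} q\,v^2\,d\sigma \;\leq\; \int_{\Omega_0}\bigl(|\nabla_g v|^2 + f v^2\bigr)\,dv + \int_{\partial_M\Omega_0} h\,v^2\,d\sigma .
\]
But this drops the outer contribution to the \emph{left}-hand side, namely $\int_{\partial_M\Omega_j\setminus\partial_M\Omega_0} q\,v^2\,d\sigma$, which is strictly positive whenever $v\not\equiv 0$ there. Nothing in your argument absorbs this term. Concretely, suppose $f>0$ on $\Omega_0$ and $f\equiv 0$ on $M\setminus\Omega_0$, $h\equiv 0$, and take $v\equiv 1$ on $\Omega_j$. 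Then the right-hand side of \eqref{eq038} equals $\int_{\Omega_0} f\,dv$, a fixed finite number independent of $j$, while the left-hand side is $\int_{\partial_M\Omega_j} q\,d\sigma$; for any fixed smooth $q>0$ this tends to infinity with $j$ (the boundary is noncompact), so \eqref{eq038} fails. Thus a single Steklov estimate on $\Omega_0$ cannot suffice, and $q$ cannot be chosen independently of the exhaustion in the way you describe.

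The paper's proof proceeds differently. It first observes (just before the lemma) that the hypotheses $f,h\ge 0$ outside $\Omega_0$ force $\sigma_1(\Omega_j)>0$ for \emph{every} $j$, so that one has the Poincar\'e--Steklov inequality on each $\Omega_j$, not just $\Omega_0$. It then builds $q$ adapted to the whole exhaustion: on the annulus $E_i=\partial_M\Omega_i\setminus\partial_M\Omega_{i-1}$ one takes $q\le \sigma_1(\Omega_i)/2^{i+1}$. With this decaying choice, $\int_{\partial_M\Omega_j}qv^2$ is dominated by $\sum_{i=0}^j 2^{-(i+1)}$ times the quadratic form on $\Omega_j$, and the geometric series sums to at most $1$. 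The point you are missing is precisely this layered construction: $q$ must decay across the annuli at a rate dictated by the eigenvalues $\sigma_1(\Omega_i)$ of the successive domains. (Your side remark about the Neumann condition on $\partial_0\Omega_0$ being natural rather than essential is correct and is used implicitly in the paper as well.)
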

\begin{proof}

We prove item (a). Since $\sigma_{1}\left(\Omega_{1}\right)>0$, then $\sigma_{1}\left(\Omega_{j}\right)>0$, for all $j$. By \eqref{cvarsteklov} it follows that  for all $v\in C^\infty(\Omega_j)$:
\begin{equation}\label{eq004}
    \int_{\partial_M\Omega_{j}} v^{2}d\sigma \leq \frac{1}{\sigma_{1}\left(\Omega_{j}\right)} \left(\int_{\Omega_{j}}(|\nabla_g v|^{2}+fv^2)dv+\int_{\partial_M\Omega_j}hv^2d\sigma\right).
\end{equation}
Consider a function $q\in C^\infty(\partial M)$ satisfying
$$
0<q(x) \leq \sum_{i=0}^{\infty} C_{i} \chi_{E_{i}}(x), \mbox{ for all }x\in\partial M,
$$
where $E_0=\partial_M\Omega_0$, $E_{i}=\partial_M\Omega_{i}\setminus\partial_M\Omega_{i-1}$ for $i=1,2, \ldots$ Here $\chi_{E_i}$ is the characteristic function and $C_i= \sigma_{1}\left(\Omega_{i}\right)/2^{i+1}$. Using \eqref{eq004} and the assumptions on $f$ and $h$, we obtain
$$
\begin{aligned}
\int_{\partial_M\Omega_{j}} q v^{2}d\sigma &\leq  \sum_{i=0}^{j} C_{i} \int_{ E_{i}} v^{2}d\sigma\leq \sum_{i=0}^{j} C_{i} \int_{\partial_M \Omega_{i}} v^{2}d\sigma\\
&\leq \sum_{i=0}^{j} \frac{C_{i}}{\sigma_{1}(\Omega_i)}\left(\int_{\Omega_{i}}(|\nabla_g v|^{2}+fv^2) dv+\int_{\partial_M\Omega_i}hv^2d\sigma\right) \\
&\leq\left(\sum_{i=0}^{j} \frac{C_{i}}{\sigma_{1}\left(\Omega_{i}\right)}\right)\left(\int_{\Omega_{j}}(|\nabla_g v|^{2}+fv^2) dv+\int_{\partial_M\Omega_j}hv^2d\sigma\right)\\
&\leq \int_{\Omega_{j}}(|\nabla_g v|^{2}+fv^2)dv+\int_{\partial_M\Omega_j}hv^2d\sigma.
\end{aligned}
$$

 The proof of Item (b) follows a similar argument. 
 In this case, we rely on the variational characterization \eqref{cvar}. Given that  $\mu_{1}\left(\Omega_{j}\right)>0$ for all $j$,  it follows that for all $v\in C^\infty(\Omega_j)$ we have
\begin{equation*}
    \int_{\Omega_{j}} v^{2} dv \leq \frac{1}{\sigma_{1}\left(\Omega_{j}\right)} \int_{\Omega_{j}}(|\nabla_g v|^{2}+fv^2) dv+\int_{\partial_M\Omega_i}hv^2d\sigma.
\end{equation*}
 Consider $C_{i}= \sigma_{1}\left(\Omega_{i}\right)/2^{i+1}$ and a function $p\in C^\infty(M)$ satisfying
$$
0<p(x) \leq \sum_{i=0}^{\infty} C_{i} \chi_{E_{i}}(x), \mbox{ for all }x\in M,
$$
where $E_0=\Omega_0$, $E_{i}=\Omega_{i}\setminus\Omega_{i-1}$ for $i=1,2, \ldots$. As before we obtain
$$
\begin{aligned}
\int_{\Omega_{j}} p v^{2} dv
&\leq\left(\sum_{i=0}^{j} \frac{C_{i}}{\sigma_{1}\left(\Omega_{i}\right)}\right)\left(\int_{\Omega_{j}}(|\nabla_g v|^{2} +fv^2)dv+\int_{\partial_M\Omega_j}hv^2d\sigma\right)\\
& \leq\int_{\Omega_{j}}(|\nabla_g v|^{2} +fv^2)dv+\int_{\partial_M\Omega_j}hv^2d\sigma.
\end{aligned}
$$
\end{proof}

In the following, we establish sufficient conditions for the existence of solutions to the boundary value problems that were introduced at the beginning of this section.

\begin{lemma}\label{lemma1}
Let $(M,g)$ be a noncompact Riemannian manifold with nonempty boundary. Let $f \in C^{\infty}(M)$ and $h\in C^{\infty}(\partial M).$ Then
\begin{itemize}
  \item[(a)]There exists a positive solution $u$ of 
\begin{equation}\label{eq007}
    \begin{cases}
  -\Delta_g u+fu=0 & \mbox{ in }M,\\
  \dfrac{\partial u}{\partial \eta}+hu>0 & \mbox{ on }\partial M,
  \end{cases}
\end{equation}
   if and only if there exists a smooth function  $q>0$ on $\partial M$ such that for all $v \in C_{0}^{\infty}(M),$ the following inequality holds:
\begin{equation}\label{eq001}
\int_{\partial M} q v^{2}d\sigma \leq \int_{M}\left(|\nabla_g v|^{2}+f v^{2}\right)dv +\int_{\partial M}hv^2d\sigma.    
\end{equation}
    \item[(b)]There exists a positive solution $u$ of 
    $$\begin{cases}
  -\Delta_g u+fu>0 & \mbox{ in }M,\\
  \dfrac{\partial u}{\partial \eta}+hu=0 & \mbox{ on }\partial M,
  \end{cases}$$
if and only if there exists a smooth function $p>0$ in $M$ such that for all $v \in C_{0}^{\infty}(M),$ the following inequality holds:
$$
\int_{M} p v^{2} d v \leq \int_{M}\left(|\nabla_g v|^{2}+f v^{2}\right) d v+\int_{\partial M}hv^2d\sigma .
$$
\end{itemize}
\end{lemma}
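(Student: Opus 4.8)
The plan is to prove item (a) (item (b) being entirely analogous, replacing the boundary integral $\int_{\partial M}qv^2$ by the interior integral $\int_M pv^2$ and using $\mu_1$ in place of $\sigma_1$). I treat the two implications separately.

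\emph{Necessity.} Suppose $u>0$ solves \eqref{eq007}. The idea is the classical Fischer--Colbrie--Schoen--type substitution: for $v\in C_0^\infty(M)$ write $v=u\varphi$ and integrate by parts. From $-\Delta_g u+fu=0$ one gets, after expanding $|\nabla_g v|^2 = \varphi^2|\nabla_g u|^2 + 2u\varphi\langle\nabla_g u,\nabla_g\varphi\rangle + u^2|\nabla_g\varphi|^2$ and using the divergence theorem on $u\varphi^2$ against $\Delta_g u$,
\[
\int_M\bigl(|\nabla_g v|^2+fv^2\bigr)\,dv = \int_M u^2|\nabla_g\varphi|^2\,dv + \int_{\partial M} u\varphi^2\,\frac{\partial u}{\partial\eta}\,d\sigma.
\]
Adding $\int_{\partial M}hv^2d\sigma=\int_{\partial M}hu^2\varphi^2d\sigma$ to both sides and using the boundary inequality $\partial u/\partial\eta + hu>0$, the right-hand side is bounded below by $\int_{\partial M}\varphi^2 u\,(\partial u/\partial\eta+hu)\,d\sigma$, and setting $q:=u^{-1}(\partial u/\partial\eta+hu)>0$ on $\partial M$ yields \eqref{eq001}, since $qv^2 = q u^2\varphi^2$. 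One should note $q$ is smooth and strictly positive because $u$ is a positive smooth function and the boundary quantity is a positive smooth function on $\partial M$.

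\emph{Sufficiency.} This is the harder direction and where Lemma \ref{lemma2} enters. Assume \eqref{eq001} holds with some $q>0$. The strategy is to solve the problem on each $\Omega_j$ and pass to the limit. First, \eqref{eq001} with test functions supported in $\Omega_0$ forces $\sigma_1(\Omega_0)>0$ (so that Lemma \ref{lemma2}(a) applies and, shrinking if necessary, we may take the $q$ produced there). On $\Omega_j$ one solves the mixed boundary value problem
\[
-\Delta_g u_j + f u_j = 0 \ \text{ in }\Omega_j, \qquad \frac{\partial u_j}{\partial\eta} + h u_j = q \ \text{ on }\partial_M\Omega_j, \qquad \frac{\partial u_j}{\partial\nu}=0 \ \text{ on }\partial_0\Omega_j,
\]
whose solvability and the strict positivity of $u_j$ follow from $\sigma_1(\Omega_j)>0$ (Fredholm alternative / variational minimization, together with the maximum principle and Hopf lemma for the mixed problem). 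Normalizing at a fixed point $x_0\in\Omega_0$, say $u_j(x_0)=1$, one then needs uniform local estimates: the coercivity inequality \eqref{eq038} from Lemma \ref{lemma2}(a), combined with an energy estimate obtained by testing the equation against $u_j$ itself, gives a uniform $H^1$ bound on each fixed $\Omega_k$ for $j\ge k$; elliptic $L^p$ and Schauder estimates up to the boundary (for the mixed Neumann/Steklov-type condition) then promote this to uniform $C^{2,\alpha}_{loc}$ bounds. A diagonal subsequence converges in $C^2_{loc}$ to a nonnegative $u$ solving $-\Delta_g u+fu=0$ in $M$ with $\partial u/\partial\eta+hu=q>0$ on $\partial M$; the Harnack inequality, applied on the exhaustion, upgrades $u\ge 0$ with $u(x_0)=1$ to $u>0$ everywhere.

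\emph{Main obstacle.} The delicate point is the uniform interior-and-boundary estimate that makes the limit nontrivial: one must rule out that $u_j\to 0$ locally, and for this the Harnack inequality is the right tool, but applying it up to the (noncompact) boundary with a mixed Neumann/Steklov condition requires care — one propagates a Harnack chain along the exhaustion keeping the constant controlled on each compact piece. The coercivity estimate \eqref{eq038} is exactly what prevents the boundary mass $\int_{\partial_M\Omega_j}u_j^2$ from blowing up relative to the Dirichlet energy, which is what keeps the normalization stable. Everything else — the per-domain solvability, the Fredholm/variational setup, and the regularity theory for the mixed boundary condition — is standard and can be cited (e.g.\ Appendix A of \cite{MR4728702}).
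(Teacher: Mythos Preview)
Your necessity argument is correct and in fact more direct than the paper's: the paper argues via the first eigenvalue of the Steklov-type problem with Dirichlet data on $\partial_0\Omega$ and an integration-by-parts comparison with $u$, whereas your Fischer--Colbrie--Schoen substitution $v=u\varphi$ reaches the same inequality in one step.

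The sufficiency argument, however, has a genuine gap. You invoke Lemma~\ref{lemma2}(a), but that lemma requires $f\ge 0$ on $M\setminus\Omega_0$ and $h\ge 0$ on $\partial M\setminus\partial_M\Omega_0$, hypotheses which are \emph{not} part of Lemma~\ref{lemma1}. Moreover, inequality \eqref{eq001} is only assumed for $v\in C_0^\infty(M)$; restricting to functions supported in $\Omega_0$ gives you control over functions vanishing on $\partial_0\Omega_0$, which is the Dirichlet--Steklov quotient, not $\sigma_1(\Omega_0)$ (defined with a Neumann condition on $\partial_0\Omega_0$). So neither the hypotheses nor the conclusion of Lemma~\ref{lemma2} are available here. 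There is also an inconsistency: the mixed problem you pose on $\Omega_j$ is inhomogeneous (right-hand side $q$ on $\partial_M\Omega_j$), hence has a unique solution and cannot be normalized by $u_j(x_0)=1$.

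The paper bypasses the exhaustion entirely in this lemma. After shrinking $q$ so that $q\in L^1(\partial M)$, it directly minimizes the global functional
\[
J(v)=\int_M\bigl(|\nabla_g v|^2+fv^2\bigr)\,dv+\int_{\partial M}\bigl(hv^2-2qv\bigr)\,d\sigma
\]
over the Hilbert space $H=\{v\in H^1_{loc}(M):\|v\|<\infty\}$, where $\|v\|^2$ is the right-hand side of \eqref{eq001}. Inequality \eqref{eq001} makes $\|\cdot\|$ a genuine norm and, together with Cauchy--Schwarz and $q\in L^1$, shows $J$ is bounded below with bounded minimizing sequences; weak lower semicontinuity then yields a minimizer $u$ solving $-\Delta_g u+fu=0$ with $\partial u/\partial\eta+hu=q>0$, and positivity follows from $J(|u|)\le J(u)$ and the Hopf lemma. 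Lemma~\ref{lemma2} and the exhaustion enter only later, in Theorem~\ref{solution1}, where the sign conditions on $f$ and $h$ are actually assumed.
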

\begin{proof}
We prove item (a). Assume there exists a positive smooth function $u$ satisfying \eqref{eq007}. Define $q=(\partial u/\partial \eta+hu)/u>0$, and consider the operator
$
B:=\partial/\partial \eta+h-q.
$
Note that $Bu=0$. Given $v\in C^\infty_0(M)$, let $\Omega=\mbox{supp }v$. Denote by $\lambda_1(\Omega)$  the lowest eigenvalue of $B$ with Dirichlet boundary condition on $\partial_0\Omega,$ and let $\phi>0$ be the corresponding eigenfunction. This means that 
\begin{equation*}
    \begin{cases}
        -\Delta_g\phi+f\phi =0 & \text{ in } \Omega, \\
        B\phi=\lambda_1(\Omega)\phi &\text{ on } \partial_M\Omega,\\
          \phi= 0, &\text{ on } \partial_0\Omega.
    \end{cases}
\end{equation*} 
Since $\phi>0$, the maximum principle implies that  $\partial\phi/\partial\nu<0$ on $\partial_0\Omega$. Using integration by parts, we obtain
$$\lambda_1(\Omega)\int_{\partial_M\Omega}\phi u=\int_{\partial_M\Omega}uB\phi =\int_{\partial_M\Omega}\phi Bu -\int_{\partial_0\Omega}u\frac{\partial\phi}{\partial\eta}\geq 0.$$
Thus $\lambda_1(\Omega)\geq 0$. From the variational characterization, we obtain 
\begin{equation*}
 \lambda_1(\Omega)  =  \displaystyle\inf\left\{\frac{\int_\Omega(|\nabla_g v|^2+fv^2)+\int_{\partial_M\Omega}(h-q)v^2}{\int_{\partial_M\Omega}v^2}\right\},
\end{equation*}
where the infimum is taken over all functions $v\in H^1(M)$ such that $v\not=0$ on $\partial_M\Omega$ and $v=0$ on $\partial_0\Omega$. This implies \eqref{eq001}.

For the converse, by making $q>0$ smaller if necessary, we may assume that $q\in L^1(\partial  M)\cap C^\infty(M)$. Define the functional $J(v)$ as
$$
J(v)=\int_M(|\nabla_g v|^2+fv^2)+\int_{\partial M}(hv^2-2qv).
$$
For $v\in H^1_{loc}(M)$, we can define the norm
$$\|v\|:=\left(\int_M(|\nabla_g v|^2+fv^2)+\int_{\partial M}hv^2\right)^{1/2}.$$
In fact this defines a norm by \eqref{eq001} and the fact that $q>0$. 
Consider the Hilbert subspace $H\subset H^1_{loc}(M)$ given by
$H:=\{v\in H^1_{loc}(M):\|v\|<\infty\}.$
Using the Schwarz inequality and \eqref{eq001}, for any $v\in C_0^\infty(M)$ we obtain
\begin{equation}\label{eq002}
\left(\int_{\partial M}qv\right)^2  \leq \|q\|_{L^1(\partial M)}\int_{\partial M}qv^2\leq \|q\|_{L^1(\partial M)} \|v\|^2.
\end{equation}
From this,  we obtain the following inequality
$$
J(v)\geq \|q\|_{L^1(\partial M)}^{-1}\left(\int_{\partial M}qv\right)^2-2\int_{\partial M}qv\geq C,
$$
where $C$ is a constant independently of $v$. This implies that the functional $J:H\rightarrow\mathbb R$ is bounded from below. 

Note that a critical point $u$ of  $J$  corresponds to a weak solution of problem 
$$\begin{cases}
-\Delta_g u+fu=0 & \mbox{ in }M,\\
\dfrac{\partial u}{\partial\eta}+hu=q>0 & \mbox{ on }\partial M.
\end{cases}$$
By classical regularity theory, we conclude that $u$ is a smooth solution to this problem. Therefore, we proceed to prove that $J$ attains a minimum.

Let $\{v_j\}$ be a sequence in $H$ such that $J(v_j)\rightarrow\displaystyle a:=\inf_{v\in H} J(v)$, which implies that $|J(v_j)|\leq c$, for some constant $c>0$. Thus, using 
\eqref{eq001} and \eqref{eq002} we have
$$
\|v_j\|^2=J(v_j)+2\int_{\partial M}qv_j\leq c+2\|q\|^{1/2}_{L^1(\partial M)}\|v_j\|,
$$
which implies that the sequence $v_j$ is bounded in $H$. 

Since $H$ is a Hilbert space, there exists a subsequence, still denoted by $\{v_j\}$, which converges weakly to some function $u\in H$. Inequality \eqref{eq002} implies that the functional $v\mapsto \int_{\partial M}qv$ is weakly continuous. Therefore, $\int_{\partial M}qv_j\to\int_{\partial M}qu$. From this, combined with the fact that  $\|u\|\leq\liminf\|v_j\|,$ we obtain $J(u)\leq\liminf J(v_j),$ which implies that $u$ minimizes $J$. It is easy to see that $q>0$ implies $J(|u|)\leq J(u)$, so $u\geq 0$.  Note that the minimum of $J$ is not attained in the interior of $M$. If $x_0\in\partial M$ is such that $u(x_0)=0$, then $0<q(x_0)=\partial u/\partial\eta(x_0)<0$, where $\eta$ is the  outward unit normal to $\partial M$.  This leads to a contradiction, so $u>0$ in $M$.

The proof of item (b) follows similarly. First, suppose that such a solution exists. Then $u$ is a positive solution of 
$$
Qu:=-\Delta_gu+(f-p)u=0,
$$
where $p=(-\Delta_gu+fu)/u$. By reasoning as before and considering the eigenvalue problem associated with $Q$ with the Dirichlet boundary condition on $\partial\Omega$,  we obtain that 
$$
\int_{M} p v^{2} d v \leq \int_{M}\left(|\nabla_g v|^{2}+f v^{2}\right) d v+\int_{\partial M}hv^2d\sigma.
$$

For the converse, we can assume that $p\in L^1(M)\cap C^\infty(M)$. Given $v\in H^1_{loc}(M)$, and noting that $p>0$, the following norm is well-defined:
$$
\|v\|:=\left(\int_M(|\nabla_g v|^2+fv^2)+\int_{\partial M}hv^2\right)^{1/2}.
$$

Now, define the functional 
$$
J(v)=\int_M(|\nabla v|^2+fv^2-2pv)+\int_{\partial M}hv^2.
$$
As in the previous case, we can show that $J$ is bounded from below and attains a positive minimum $u$.
\end{proof}

We conclude this analysis with the following result, which will be further related to equation \eqref{confchange}.

\begin{theorem}\label{solution1}
Let $(M,g)$ be a noncompact Riemannian manifold with nonempty boundary. Let $\Omega_0\subset M$  be a bounded open set in $M$ such that  $f\geq0$ in $M\setminus\Omega_0$  and $h\geq0$ on $\partial M\setminus\partial_M\Omega_0$.
\begin{itemize}
    \item[(a)] If $\sigma_1(\Omega_0)>0,$ then there exists a smooth positive function $u$ satisfying
\begin{equation}\label{eq003}
    \begin{cases}
   -\Delta_g u+fu\geq  0 & \mbox{ in }M,\\     
   \dfrac{\partial u}{\partial \eta}+hu> 0 & \mbox{ on }\partial M.
    \end{cases}
\end{equation}
Furthermore, there exist positive constants $c_1$ and $c_2$ such that $0<c_2<u<c_1$ .
 \item[(b)] If $\mu_1(\Omega_0)>0,$ then there exists a smooth positive solution $u$ of
 \begin{equation}\label{eqbry}
 \begin{cases}
   -\Delta_g u+fu> 0 & \mbox{ in }M,  \\
   \dfrac{\partial u}{\partial \eta}+hu\geq0 & \mbox{ on }\partial M.
 \end{cases}
 \end{equation}
Moreover, there exist positive constants $c_1$ and $c_2$ such that $0 < c_2 < u < c_1$. In the special case where $h = 0$, the boundary condition in \eqref{eqbry} reduces to $\partial u/\partial\eta = 0$.
\end{itemize}
\end{theorem}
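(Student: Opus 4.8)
The plan is to chain Lemma~\ref{lemma2} into Lemma~\ref{lemma1} to produce a positive solution of the stated (in)equalities, and then to rebuild that solution from the exhaustion in order to get the uniform two-sided bound. For part~(a): since $\sigma_1(\Omega_0)>0$ with $f\ge 0$ on $M\setminus\Omega_0$ and $h\ge 0$ on $\partial M\setminus\partial_M\Omega_0$, Lemma~\ref{lemma2}(a) supplies $q\in C^\infty(\partial M)$ with $q>0$ satisfying \eqref{eq038} for every $j$; since any $v\in C_0^\infty(M)$ is supported in some $\Omega_j$, \eqref{eq038} specializes to \eqref{eq001}, and Lemma~\ref{lemma1}(a) then yields a smooth $u>0$ on $M$ with $-\Delta_g u+fu=0$ in $M$ (a fortiori $\ge 0$) and $\partial u/\partial\eta+hu=q>0$ on $\partial M$, which is precisely \eqref{eq003}. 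For part~(b): Lemma~\ref{lemma2}(b) (using $\mu_1(\Omega_0)>0$) gives $p\in C^\infty(M)$ with $p>0$, and Lemma~\ref{lemma1}(b) gives a smooth $u>0$ with $-\Delta_g u+fu=p>0$ in $M$ and $\partial u/\partial\eta+hu=0$ on $\partial M$; when $h\equiv 0$ this last condition is exactly $\partial u/\partial\eta=0$.

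For the uniform bounds, which the variational solution above need not satisfy a priori, I would first make a harmless reduction: replacing $(f,h)$ by $(\chi f,\chi h)$ with $\chi\in C^\infty(M)$, $0\le\chi\le 1$, $\chi\equiv 1$ on $\overline{\Omega_0}$ and $\chi\equiv 0$ off $\Omega_1$, leaves $\sigma_1(\Omega_0)$, $\mu_1(\Omega_0)$ and all sign hypotheses untouched, and only weakens the inequalities to be proved (since $f-\chi f=(1-\chi)f\ge 0$ and likewise for $h$); so one may assume $f,h$ are compactly supported in $\overline{\Omega_1}$, hence the eventual $u$ is harmonic near infinity. On each $\Omega_j$ solve the mixed problem with interior equation $-\Delta_g u_j+fu_j=0$ (resp.\ $=p$), Robin/Neumann datum $q$ (resp.\ $p$, resp.\ $0$) on $\partial_M\Omega_j$, and $u_j\equiv 1$ on $\partial_0\Omega_j$; this is solvable because $\sigma_1(\Omega_j)>0$ (resp.\ $\mu_1(\Omega_j)>0$) renders the associated bilinear form coercive on functions vanishing on $\partial_0\Omega_j$ (after an arbitrarily small perturbation of $\Omega_j$, should $0$ happen to be a Dirichlet eigenvalue). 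The maximum principle gives $u_j>0$; since $u_j$ is harmonic outside $\overline{\Omega_1}$ with boundary datum $q>0$ (resp.\ $\ge 0$) there, the Hopf lemma forbids an interior minimum of $u_j$ on $\partial_M\Omega_j\setminus\partial_M\Omega_1$, so $\min_{\Omega_j\setminus\Omega_1}u_j=\min\{1,\min_{\partial_0\Omega_1}u_j\}$, and a uniform Harnack/elliptic estimate on the fixed compact set $\overline{\Omega_1}$ — this is where the positivity of $\sigma_1(\Omega_0)$, i.e.\ control of the region where $f$ may be negative, is used — keeps $\min_{\partial_0\Omega_1}u_j$ bounded away from $0$ uniformly in $j$; a complementary superharmonic-at-infinity barrier, exploiting that $q$ can be taken small and decaying (as Lemma~\ref{lemma2}(a) allows), caps $u_j$ from above uniformly. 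Interior and boundary Schauder estimates together with a diagonal argument then extract $u_j\to u$ in $C^2_{loc}(M)$, and the limit inherits the equation, the strict boundary inequality, and $c_2\le u\le c_1$, with the strong maximum principle upgrading this to $0<c_2<u<c_1$; undoing the cutoff reduction, $u$ solves the original inequalities because $-\Delta_g u+fu=(1-\chi)fu\ge 0$ (resp.\ $=p+(1-\chi)fu>0$) and $\partial u/\partial\eta+hu=q+(1-\chi)hu>0$ (resp.\ $=(1-\chi)hu\ge 0$).

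The main obstacle is the uniform estimate, and within it the upper bound at infinity: producing \emph{some} positive solution is essentially automatic from Lemmas~\ref{lemma2}--\ref{lemma1}, but making the bounds independent of the exhausting domain requires carefully propagating the positivity of $\sigma_1(\Omega_0)$ (resp.\ $\mu_1(\Omega_0)$) — the sole ingredient taming the part of $M$ where the zeroth-order coefficient is indefinite — through a maximum-principle/barrier comparison, and constructing the upper barrier so that the positive boundary datum $q$ (needed to keep $\partial u/\partial\eta+hu$ strictly positive) does not force $u$ to grow.
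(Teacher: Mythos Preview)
Your chaining of Lemma~\ref{lemma2} into Lemma~\ref{lemma1} to produce a positive solution is exactly what the paper does. The divergence is in how you obtain the two-sided bound $0<c_2<u<c_1$: you propose to rerun the construction on each $\Omega_j$ with Dirichlet datum $1$ on $\partial_0\Omega_j$, control the oscillation on a fixed compact piece by Harnack, build an upper barrier at infinity exploiting the decay of~$q$, and pass to a diagonal limit --- and you correctly flag the upper barrier as the unresolved step.

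The paper bypasses all of this with a pointwise substitution. Having produced $u>0$ from Lemma~\ref{lemma1} (so $-\Delta_g u+fu=0$ and $\partial u/\partial\eta+hu>0$ in case~(a)), one sets $v=u+\alpha$ with $\alpha>0$ chosen small enough that $\partial v/\partial\eta+hv>0$ survives on the compact set $N=\{h<0\}\subset\partial_M\Omega_0$, and then puts
\[
w=1-e^{-cv}.
\]
The bounds $0<1-e^{-c\alpha}<w<1$ are immediate --- no barrier, no exhaustion, no limit. One then checks $-\Delta_g w+fw\ge 0$ and $\partial w/\partial\eta+hw>0$ by direct computation: off $N$ the inequality $e^t\ge 1+t$ suffices, while on the compact set $\overline N$ a Taylor remainder bound $0<e^{cv}-1-cv\le\tfrac12 c^2 v_\infty^2 e^{cv_\infty}$ lets one absorb the bad term by taking $c$ small. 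Part~(b) is the mirror argument with $N=\{f<0\}\subset\Omega_0$; when $h\equiv 0$ one has $\partial w/\partial\eta=ce^{-cv}\,\partial u/\partial\eta=0$ exactly, which is where the final clause of~(b) comes from.

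So your outline has a real gap --- the upper barrier is asserted, not constructed --- and more importantly it misses the idea that renders the gap moot: the transformation $w=1-e^{-c(u+\alpha)}$ manufactures the uniform two-sided bound for free, reducing what you treat as an analytic problem at infinity to a short algebraic check on a compact set.
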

\begin{proof}
Let $\{\Omega_j\}$ be an exhaustion of $M$ as defined at the beginning of this section. In case (a), by Lemma \ref{lemma2}, there exists a positive function $q\in C^\infty(M)$ satisfying \eqref{eq038}, in particular, for all $u \in C_{0}^{\infty}(M),$ we have
 $$
\int_{\partial M} q u^{2}d\sigma \leq \int_{M}(|\nabla_g u|^{2} dv+fu^2)+\int_{\partial M}hu^2d\sigma.$$ 
By Lemma \ref{lemma1}, there exists a positive smooth solution $u$ of \eqref{eq007}.

\medskip

\noindent \textbf{Claim:} We can modify $u$ to obtain a solution $w$ satisfying  $$0<c_2<w<c_1,$$ where $c_1$ and $c_2$ are constants. 

\medskip

Consider $N=\{x \in \partial M: h(x)<0\},$ which is bounded since it is a subset of $\partial_M \Omega_0$. Since the closure of $N$ is compact, choose a small constant $\alpha>0$ so that 
$$
\frac{\partial u}{\partial \eta}+h(u+\alpha)>0\mbox{ on }N,
$$ 
and hence on all $\partial M$. Define  $v:=u+\alpha$. Then  $v>\alpha>0$ and satisfies 
$$\frac{\partial v}{\partial \eta}+hv>0\mbox{  on }\;N.$$

Next, set $w=1-e^{-c v}$, where $c>0$ is a small constant to be chosen later. Then  
$$0<1-e^{-c \alpha}<w<1,\mbox{and}\quad \Delta_g w\leq 0\mbox{  in } M.$$ 
Thus 
$$-\Delta_gw+fw\geq0\mbox{ in }M$$
and 
$$
\frac{\partial w}{\partial \eta}+hw \geq e^{-c v}\left(c\frac{\partial v}{\partial \eta}+chv+h\left(e^{c v}-1-c v\right)\right)\quad\mbox{on}\quad\partial M.
$$
Since $e^{t} \geq 1+t$ for all $t\in\mathbb R$,
we conclude that  $\frac{\partial w}{\partial \eta}+hw>0$  on $\partial M\setminus N$.   

Let $v_{\infty}=\displaystyle\max _{\overline{N}} v(x)$. Using  Taylor's Theorem, we obtain $0<e^{cv}-1-cv\leq \frac{1}{2}c^2v_\infty^2e^{cv_\infty}$ in $N$. This implies that
\begin{equation}\label{estimate_1}
\frac{\partial w}{\partial \eta}+hw \geq c e^{-c v}\left(\frac{\partial v}{\partial \eta}+hv-\frac{1}{2}  c^2 v_{\infty}^{2}e^{c v_{\infty}}|h|\right).
\end{equation}
Since $\overline{N}$ is compact, we can choose $c>0$ sufficiently small such that the right-hand side of \eqref{estimate_1} is positive in $N$.

The proof of case (b) follows a similar argument, focusing on the eigenvalue $\mu_1(\Omega_0).$ Here, we consider $N=\{x \in  M: f(x)<0\}.$ As before, since the closure of $N$ is compact, we can find a function $v>0$ satisfying $-\Delta_g v+(f+c)v>0$ in $M$, where $c>0$ is a small constant. Next, we define $w=1-e^{-c v}$, where $c>0$ is a small constant chosen such that 
\begin{equation*}
    -\Delta_g w+fw \geq e^{-c v}\left(c(-\Delta_g v+fv)+f\left(e^{c v}-1-c v\right)\right)>0,
\end{equation*}
satisfying $\partial w/\partial \eta=0$ on $\partial M$. Note that since $h\geq0,$ this implies
$$\begin{cases}
   -\Delta_g w+fw> 0 & \mbox{ in }M,  \\
   \dfrac{\partial w}{\partial \eta}+hw\geq0 & \mbox{ on }\partial M.
 \end{cases}$$
 However, in the case that $h = 0$, the boundary condition  reduces to $\partial u/\partial\eta = 0$ on $\partial M.$
Therefore,  the desired result follows. 
\end{proof}

\begin{remark}
   It is interesting to note that, in Item (a), if $f = 0$, we cannot conclude that $\Delta_g u = 0$. This contrasts with the situation in Item (b) that $h=0$ implies that $\partial w/\partial \eta=0$ on $\partial M$. A consequence of this technical observation is that if the initial metric has a minimal boundary, the deformation also preserves a minimal boundary (Theorem \ref{main1}). However, if the initial metric is scalar-flat, the deformation is not necessarily scalar-flat (Corollary \ref{main2}).
\end{remark}



\section{Deformations that increases the scalar curvature and the mean curvature}\label{deformation}
In this section, we provide proofs for Theorem \ref{main1} and Theorem \ref{trichotomy}.


 \begin{proof}[Proof of Theorem \ref{main1}]
Assume that the scalar curvature $R_g$ or the mean curvature $H_g$ is strictly positive at some point in $M$. Let $\Omega_0$ be a domain containing a neighborhood  of this point. Since the first eigenvalue of the conformal Laplacian is given by 
 \begin{equation*}
\mu_1(\Omega_0)=\inf_{u\in H^1(\Omega_0)\setminus\{0\}} \frac{\int_{\Omega_0}(|\nabla_g u|^2+c_nR_gu^2)dv+d_n\int_{\partial_M\Omega_0} H_g u^2d\sigma}{\int_{\Omega_0} u^2d\sigma},
\end{equation*}
where $c_n=(n-2)/4(n-1)$ and $d_n=(n-2)/2,$ it follows from our assumptions that $\mu_1(\Omega_0)>0.$ Thus, the result follows from Theorem  \ref{solution1}.

Assume now that the scalar curvature and mean curvature of  boundary vanish everywhere. We further assume  that $M$ is noncompact, since the compact case was already addressed by  A. Carlotto and C. Li  \cite[Proposition 2.5]{MR4728702}) and T. Cruz and F. Vitório \cite[Theorem 1.5]{MR3977557}.  Consider a domain $\Omega_0$  in $M$ such that the Ricci curvature is not identically zero  in some small geodesic ball $B_R(p)$ in the interior of $\Omega_0$.  

Let $g(t)$ denote a smooth family of metrics depending on a parameter $t,$ with $g(0)=g$. Let  $h=(dg/dt)|_{t=0}.$ For convenience, normalize $g=g(0)$ such that $\mbox{Vol}(\Omega_0,g(0))=1.$  Now, consider the following  first eigenfunction $\phi_t$ solving the Neumann boundary value problem
$$
\begin{cases} 
\Delta_{g(t)} \phi_t +\dfrac{n-2}{4(n-1)}R(g(t))= \mu_1(g(t)) \phi_t & \text{in } \Omega_0, \\
\dfrac{\partial \phi_t}{\partial \eta} +\dfrac{n-2}{2}H(g(t))\phi_t= 0 & \text{on } \partial_M\Omega_0,\\
\dfrac{\partial \phi_t}{\partial \nu} = 0 & \text{on } \partial_0 \Omega_0,
\end{cases}
$$
 where $\phi_t$ is the unique positive eigenfunction
normalized by $\|\phi_t\|_{L^2}=1.$  
In particular $\phi_0^2=\mbox{Vol}(\Omega_0,g(0))^{-1}=1.$

The existence of a differentiable curve of eigenvalues through $\sigma(g(0))$ follows  from Lemma A.2 of \cite{MR4728702} (see also Lemma A.1 of \cite{MR3406373}). Consequently, we have
 \begin{eqnarray*}
 \frac{d}{dt}\Big|_{t=0}\mu_1(\Omega_0,g(t))&=&\frac{d}{dt}\Bigg|_{t=0}\Bigg(\int_{\Omega_0}\left(|\nabla_g \phi_t|^2+\frac{n-2}{4(n-1)}R_g\phi_t^2\right)dv_{g(t)}\\
 &&+\frac{n-2}{2}\int_{\partial_M\Omega_0} H_g \phi_t^2d\sigma_{g(t)}\Bigg).
 \end{eqnarray*}
Recall the following classical computations (see for example \cite{MR1970022}):
\begin{equation*}
\begin{aligned}
\frac{d}{dt}\Big|_{t=0} R_g h &= -\Delta_g (\operatorname{tr}_g h) + \operatorname{div}_g \operatorname{div}_g h - \langle h, \operatorname{Ric}_g \rangle, \\
2 \frac{d}{dt}\Big|_{t=0}  H_gh &= \left[d (\operatorname{tr}_g h) - \operatorname{div}_g h\right](\eta) - \operatorname{div}_{g|_{\partial M}} X - \langle A_g, h \rangle_g,
\end{aligned}
\end{equation*}
where $A_g$ denotes the second fundamental form of $\partial M$ in $M$ with respect to the outer unit normal, $X$ is the vector field dual to the one-form $\omega(\cdot) = h(\cdot,\eta)$ and  $\mbox{tr}_g = g^{ij}h_{ij}$ is the trace of $h.$  By integration by parts, we obtain that 
 \begin{equation}\label{vari_eigen}
 \frac{d}{dt}\Big|_{t=0}\mu_1(\Omega_0,g(t))=-c_n\int_{\Omega_0}\langle h, \mbox{Ric}_g\rangle dv_{g}-d_n\int_{\partial_M\Omega_0} \langle h, A_g\rangle d\sigma_g
 \end{equation}

Now, let $h=-\chi\Ric_{g},$ where $\chi\not\equiv0$ is a non-negative  cutoff function supported in $B_R(p).$  Therefore
\begin{eqnarray*}
 \frac{d}{dt}\Big|_{t=0}\mu_1(\Omega_0,g(t))>0.
 \end{eqnarray*}

 Since $\mu_1(\Omega_0,g)=0,$ it follows that  $\mu_1(\Omega_0,g(t))>0$  for all sufficiently small positive values of $t$.  By Theorem \ref{solution1},  there exists a smooth function $u$ satisfying \eqref{eq003} with $0<c_1<u<c_2$. Consequently, the metric $g_{1}=u^{4/(n-2)} g$ has positive scalar curvature and minimal boundary. Moreover, we have $c_1^\frac{4}{n-2} g\leq g_1\leq c_2^\frac{4}{n-2}g$.

The completeness will follow from the fact that if $g$ is complete and $u$ satisfies $u>c>0,$ where $c$ is a constant, then $u^{\frac{4}{n-2}}g\geq cg$ is also complete. Indeed, the length of a curve $\gamma$ satisfies 
$$
\mbox{Length}_{u^{\frac{4}{n-2}}g}(\gamma)\geq C'\mbox{Length}_g(\gamma),
$$ 
where $C'$ is a positive constant. Thus, for any divergent $\gamma$, the completeness of $g$ implies $\gamma$ has infinite length with respect to $g$, and therefore also with respect to $u^{\frac{4}{n-2}}g$. Hence $u^{\frac{4}{n-2}}g$ is complete.
 \end{proof}

\begin{corollary}\label{main2}
Let $(M^n,g)$ be an n-dimensional Riemannian manifold with nonempty boundary, where  $n\geq 3$. Assume that $(M^n,g)$ has nonnegative scalar curvature and mean convex boundary. If the Ricci tensor is not identically zero, then there exists a metric $\hat g$ on $M$ with the following properties:
\begin{itemize}
    \item[i)] The scalar curvature of $\hat g$ is non-negative;
    \item[ii)] The boundary $\partial M$ is strictly mean convex with respect to $\hat g$;
\item[ii] The metrics $\hat g$ and $g$ are uniformly equivalent, that is, there exist positive constants $c_1$ and $c_2$ such that $c_1 g\leq \hat g\leq c_2g$.
\end{itemize} 
Moreover, the metric $\hat g$ is complete if and only if the original metric $g$ is complete.
\end{corollary}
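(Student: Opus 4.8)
The plan is to obtain $\hat g$ by composing two deformations: the one produced by Theorem \ref{main1}, followed by a pointwise conformal change built from Theorem \ref{solution1}(a). Since the hypotheses of the Corollary are exactly those of Theorem \ref{main1}, I would first invoke that theorem to get a metric $g_1$ on $M$ with $R_{g_1}>0$ everywhere, $\partial M$ mean convex with respect to $g_1$ (minimal if $g$ was), $g_1$ uniformly equivalent to $g$, and $g_1$ complete if and only if $g$ is complete. The sole purpose of this first step is to upgrade the scalar curvature to a strictly positive function, which is what will make the relevant Steklov-type eigenvalue positive.

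For the second step, fix any bounded open set $\Omega_0\subset M$ and set $f=c_nR_{g_1}$ and $h=d_nH_{g_1}$, where $c_n=(n-2)/4(n-1)$ and $d_n=(n-2)/2$; both are nonnegative on all of $M$ (indeed $f>0$), so the sign conditions on $f$ and $h$ outside $\Omega_0$ required by Theorem \ref{solution1}(a) hold trivially. Since $R_{g_1}$ is strictly positive and $\overline{\Omega_0}$ is compact, $f\geq\delta>0$ on $\overline{\Omega_0}$, and a trace inequality on $\Omega_0$ then gives $\sigma_1(\Omega_0)>0$ for this choice of $f$ and $h$. Theorem \ref{solution1}(a) therefore produces a smooth $u$ with $0<c_2<u<c_1$ satisfying $-\Delta_{g_1}u+fu\geq 0$ in $M$ and $\partial u/\partial\eta+hu>0$ on $\partial M$; equivalently, $L_{g_1}u\geq 0$ and $B_{g_1}u>0$.

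I would then take $\hat g=u^{4/(n-2)}g_1$. By the transformation laws \eqref{confchange}, $R_{\hat g}=u^{-(n+2)/(n-2)}L_{g_1}u\geq 0$ and $H_{\hat g}=u^{-n/(n-2)}B_{g_1}u>0$, which gives (i) and (ii). Since $c_2^{4/(n-2)}g_1\leq\hat g\leq c_1^{4/(n-2)}g_1$ and $g_1$ is uniformly equivalent to $g$, composing the two chains of inequalities gives (iii). Completeness is preserved exactly as in the proof of Theorem \ref{main1}: $u$ is bounded below by a positive constant, so divergent curves keep infinite length, and $\hat g$ is complete if and only if $g_1$, hence $g$, is complete.

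The only step that uses the hypothesis $\Ric_g\not\equiv 0$ is the appeal to Theorem \ref{main1}, and this detour is what I expect to be the main point rather than a routine one: if $R_g\equiv 0$ and $H_g\equiv 0$, then testing with constant functions shows $\sigma_1(\Omega_0)=0$ for $g$ itself, so Theorem \ref{solution1}(a) cannot be applied to $g$ directly. One must first spend $\Ric_g\not\equiv 0$ to make the scalar curvature strictly positive (everywhere, via Theorem \ref{main1}), after which positivity of $\sigma_1$ is automatic and the second conformal change goes through.
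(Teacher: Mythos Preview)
Your proof is correct and follows the same overall strategy as the paper: obtain a metric for which $\sigma_1(\Omega_0)>0$ and then apply Theorem~\ref{solution1}(a). The paper, however, proceeds slightly more directly. Rather than invoking the full conclusion of Theorem~\ref{main1} to produce $g_1$ with $R_{g_1}>0$ everywhere, it reaches into the \emph{proof} of Theorem~\ref{main1} and takes the intermediate metric $\tilde g$ (either $g$ itself, or the compactly supported perturbation $g(t)$) for which one already knows $\mu_1(\Omega_0,\tilde g)>0$; it then uses the elementary equivalence $\mu_1(\Omega_0)>0\Leftrightarrow\sigma_1(\Omega_0)>0$ and applies Theorem~\ref{solution1}(a) directly to $\tilde g$. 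Your route performs two successive conformal deformations of $\tilde g$ where the paper performs one, and replaces the $\mu_1\Leftrightarrow\sigma_1$ observation by a trace-inequality argument using $R_{g_1}\geq\delta>0$ on $\overline{\Omega_0}$. Both are valid; the paper's version is a bit more economical, while yours has the virtue of using Theorem~\ref{main1} purely as a black box.
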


\begin{proof}
From the variational characterization of the first eigenvalue, it is clear that $\mu_1(\Omega_0)$ is positive if and only if $\sigma_1(\Omega_0)$ is positive. Since, by proof of Theorem \ref{main1}, there exists a metric $\tilde g$ such that $\mu_1(\Omega_0,\tilde g)>0,$ we also have that $\sigma_1(\Omega_0,\tilde g)>0.$ By Theorem \ref{solution1}, there exists a smooth function $u$ satisfying \eqref{eqbry} with $0<c_1<u<c_2$. Therefore, the desired result follows.
\end{proof}


Now, we prove Theorem \ref{trichotomy}, which is a noncompact version of the result  addressed by A. Carlotto and C. Li \cite[Proposition 2.5]{MR4728702}.

\begin{proof}[Proof of Theorema \ref{trichotomy}]

Item (a) follows from Theorem \ref{main1}.

For item (b), assume that $M$ is Ricci-flat  and that some boundary component, say $\partial M_1$, is not totally geodesic. In this case, $\partial M_1$ is unstable. Otherwise  arguing as in \cite{MR795231}, we could find a positive constant $C$ such that
\begin{equation}\label{stabmpr}
\int_{\partial M_1}|A|^2f^2d\sigma\leq \frac{C}{\log R},
\end{equation}
where $A$ is the second fundamental form of $\partial M_1$ and $f(q)=\varphi(r)$ is a  logarithmic, radial cutoff function given by
\[
\varphi(r)=\left\{
\begin{array}{ccc}1 & \text { if } & 0 \leq r \leq 1, \\
\displaystyle 1-\frac{\log r}{\log R} & \text { if } & 1 \leq r \leq R, \\
0 & \text { if } & R \leq r.\end{array}\right.
\]
Here $r(q)={\rm dist}(x_0,q)$ denotes the intrinsic distance from $q$ to $x_0$. The right-hand side of \eqref{stabmpr} goes to $0$ as $R$ tends to infinity, which shows that $\partial M_1$ would be totally geodesic, leading to a contradiction.

Since $\partial M_1$ is unstable, we can choose  an eigenfunction $\phi \in C^{\infty}(\partial M_1)$ for the lowest eigenvalue $\lambda<0$ of the Jacobi operator $L_{\partial M_1}=\Delta_{\partial M_1}+|A|^2$. Consider a vector field $X$ in $M$ such that $X=\phi \eta$ on $\partial M_1$, where $\eta$ is the outward unit normal vector to $\partial M_1$. Let $\left(F_{t}\right)_{t \in \mathbb{R}}$   denote the flow generated by $X$. Since  $\phi$ can be taken to be strictly positive, we have
$$
\left.\frac{\partial}{\partial t}\left\langle\vec{H}\left(F_{t}(\partial M_1)\right), \eta_{t}\right\rangle\right|_{t=0}=L_{\partial M_1} \phi=-\lambda \phi>0,
$$
where $\eta_{t}$ denotes the outward unit normal vector to $F_{t}(\partial M_1)$. Thus for sufficiently small $t$ there exists a strictly mean convex subdomain $M_t$ of $M.$ Hence   $\left(M, F_{t}^{*}(g)\right)$ is  scalar-flat with a  strictly mean convex boundary component.

Assume now that $\left(M^{3}, g\right)$  is a Ricci-flat manifold with a totally geodesic boundary.  Since 
$M$ has dimension three,  by the Gauss equation, the metric $g$ must be flat and $\partial M$ is intrinsically flat.
 Let $\hat M$ be the doubling of $M$ across its boundary. The doubled metric on $\hat M$ is smooth and flat, which implies that $\hat M$ is isometric to the Euclidean space $\mathbb{R}^3.$ Thus, each connected component of $\partial M$ must be a  plane. Hence,  $M$ is either isometric to  $\partial M\times [0,\infty)$ or a slab in $\mathbb R^3.$  This finishes the proof of Theorem \ref{trichotomy}.
\end{proof}

In fact, the dimension $n=3$ is particularly special. For example, it is possible to show the following version of Corollary \ref{cor1}:

Let $X^3$ be a 3-dimensional manifold (compact or noncompact). Suppose that $g$ is a complete  metric on $M = \mathbb R^{2}\times [0,1]\# X$ with nonnegative scalar curvature and mean convex boundary. Then, one of the following holds:
\begin{itemize}
    \item[i)]  $M$ admits a  complete metric of positive scalar curvature with mean convex boundary;
    \item[ii)]   $M$ is compact, flat, and has totally geodesic boundary.
\end{itemize}

This result follows from Theorem \ref{main1} combined with  the ``half-space theorem'' for manifolds of non-negative Ricci curvature, as established by M. Anderson and L. Rodr\'iguez \cite{MR1001714}. This result applies specifically to the case $n=3.$

\subsection{Deformations that increases the mean curvature}\label{extra}

Notice that the first variation given in \eqref{vari_eigen} suggests that the second fundamental form of the boundary plays a role as significant as the Ricci curvature when deforming the metric to increase geometric quantities.

\begin{theorem}\label{increase_mean}
Let $(M^n,g)$ be an $n$-dimensional Riemannian manifold. Assume that $(M^n,g)$ has nonnegative scalar curvature and mean convex boundary. If the second fundamental form of the boundary is not identically zero,
then there exists a metric $\hat g$  on $M$ with the following properties:
\begin{itemize}
    \item[i)] The scalar curvature of $\hat g$ non-negative;
    \item[ii)] The boundary $\partial M$ is strictly mean convex with respect to $\hat g$;
    \item[iii)]  The metrics $\hat g$ and $g$ are uniformly equivalent, that is, there exist positive constants $c_1$ and $c_2$ such that $c_1 g\leq \hat g\leq c_2g$.
\end{itemize} 
Moreover, the metric $\hat g$ is complete if and only if the original metric $g$ is complete.
\end{theorem}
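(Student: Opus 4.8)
The plan is to follow the proofs of Theorem~\ref{main1} and Corollary~\ref{main2} almost verbatim, with the second fundamental form $A_g$ now playing the role previously played by the Ricci tensor. In every case the goal is to exhibit a bounded domain $\Omega_0$ and, when needed, a compactly supported perturbation $g(t)=g+th$ of $g$ inside $\Omega_0$, for which the lowest eigenvalue $\mu_1(\Omega_0,g(t))$ of the conformal Laplacian with conformal Neumann boundary condition is positive. Granting this, the equivalence $\mu_1(\Omega_0)>0\iff\sigma_1(\Omega_0)>0$ used in Corollary~\ref{main2}, combined with Theorem~\ref{solution1}(a) applied to $g(t)$ with $f=c_nR_{g(t)}$ and boundary datum $d_nH_{g(t)}$ (which satisfy the sign hypotheses of Theorem~\ref{solution1} outside $\Omega_0$, since $g(t)=g$ there), produces a conformal factor $u$ with $0<c_2<u<c_1$. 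The metric $\hat g=u^{4/(n-2)}g(t)$ then has non-negative scalar curvature and strictly mean convex boundary, is uniformly equivalent to $g(t)$ and hence to $g$ (the two metrics differing only on a fixed compact set), and is complete if and only if $g$ is, exactly as in Theorem~\ref{main1}.

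If $R_g$ or $H_g$ is strictly positive at some point, one takes $\Omega_0$ to be a domain around that point; then $\mu_1(\Omega_0,g)>0$ directly from its variational characterization and the conclusion follows with $g(t)=g$. (The compact case is already contained in \cite{MR4728702,MR3977557}, so one may also assume $M$ noncompact.) Otherwise $R_g\equiv 0$ and $H_g\equiv 0$, so $\mu_1(\Omega_0,g)=0$ with constant first eigenfunction $\phi_0\equiv 1$, and here the hypothesis $A_g\not\equiv 0$ comes into play. Fix $p\in\partial M$ with $A_g(p)\neq 0$ and a bounded domain $\Omega_0$ containing a neighborhood of $p$. Choose a non-negative, not identically zero $\chi\in C_0^\infty(\partial M)$ supported in a small ball $B\subset\partial M$ around $p$ on which $A_g\neq 0$; extend $A_g$ to a smooth symmetric $2$-tensor $\bar A$ on a collar $U$ of $\partial M$ (for instance, writing $g=d\rho^2+g_\rho$ in Fermi coordinates, let $\bar A$ be $A_g$ frozen in $\rho$, with no $d\rho$-components); and for small $\varepsilon>0$ pick $\psi_\varepsilon\in C_0^\infty(M)$ equal to $1$ on a neighborhood of $B$ and supported in $\{\rho<\varepsilon\}\cap U$. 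Set $h_\varepsilon:=-\chi\,\psi_\varepsilon\,\bar A$, a smooth symmetric $2$-tensor compactly supported in $\Omega_0$ with $h_\varepsilon|_{T\partial M}=-\chi A_g$, and let $g(t)=g+th_\varepsilon$, which is a metric for $t$ small and agrees with $g$ outside $\Omega_0$.

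Substituting $h=h_\varepsilon$ and $\phi_0\equiv 1$ into the first variation identity~\eqref{vari_eigen} yields
$$\frac{d}{dt}\Big|_{t=0}\mu_1(\Omega_0,g(t))=-c_n\int_{\Omega_0}\langle h_\varepsilon,\Ric_g\rangle\,dv_g+d_n\int_{\partial_M\Omega_0}\chi\,|A_g|^2\,d\sigma_g.$$
The boundary term is a strictly positive constant independent of $\varepsilon$, since $\chi\geq 0$ is not identically zero and $A_g\neq 0$ on its support, whereas the bulk term is bounded in absolute value by $C\cdot\mbox{Vol}_g(\mathrm{supp}\,\psi_\varepsilon)$, which tends to $0$ as $\varepsilon\to 0$ because $|h_\varepsilon|$ and $|\Ric_g|$ are uniformly bounded on a fixed compact neighborhood of $p$. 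Hence, fixing $\varepsilon$ small enough makes the derivative positive, and since $\mu_1(\Omega_0,g)=0$ we obtain $\mu_1(\Omega_0,g(t))>0$ for all sufficiently small $t>0$; the proof then concludes as in the first paragraph. The step I expect to be the main obstacle is precisely this localization balance: one must compress the support of the perturbation into a thin collar so that the uncontrolled interior Ricci contribution becomes negligible next to the boundary term $d_n\int_{\partial_M\Omega_0}\chi|A_g|^2\,d\sigma_g$, while keeping $\mathrm{supp}\,h_\varepsilon\subset\Omega_0$ and $\mathrm{supp}(h_\varepsilon|_{\partial M})\subset\partial_M\Omega_0$; once this is arranged, everything else is a routine transcription of the proofs of Theorem~\ref{main1} and Corollary~\ref{main2}.
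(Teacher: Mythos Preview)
Your proposal is correct and follows essentially the same approach as the paper: perturb $g$ by a tensor of the form $-\chi\,\psi_\varepsilon\,\bar A$ supported in an $\varepsilon$-collar of $\partial M$, use the first-variation formula~\eqref{vari_eigen} to see that the boundary contribution $d_n\int\chi|A_g|^2$ dominates the $o(1)$ interior Ricci term as $\varepsilon\to 0$, conclude positivity of the first eigenvalue for small $t>0$, and finish via Theorem~\ref{solution1}(a). The only cosmetic difference is that the paper varies $\sigma_1(\Omega_0,g(t))$ directly (normalizing $\mbox{Area}(\partial\Omega_0)=1$) rather than varying $\mu_1$ and then invoking the equivalence $\mu_1>0\iff\sigma_1>0$ as you do.
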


\begin{proof}
    The approach to prove this result is similar to the one used in Theorem \ref{main1}. Therefore, we will only sketch the main arguments.
    
As in the proof of Theorem \ref{main1},  the main focus is on the case where both the scalar curvature and the mean curvature vanish.

Let $\Omega_0$ be a domain such that $\partial_M\Omega_0$ has second fundamental form that is not identically zero. For some symmetric tensor $h,$ consider a family of metrics $g(t)= g+ th$. 

For convenience, normalize $g=g(0)$ such that $\mbox{Area}(\partial \Omega_0,g(0))=1.$ With this normalization, we obtain:
\begin{equation*}
 \frac{d}{dt}\Big|_{t=0}\sigma_1(\Omega_0,g(t))=-c_n\int_{\Omega_0}\langle h, \mbox{Ric}_g\rangle dv_{g}-d_n\int_{\partial_M\Omega_0} \langle h, A_g\rangle d\sigma_g.
 \end{equation*}

Let $\tilde A$ denote a smooth 
extension of the tensor $A_g$  to a neighborhood of $\partial_M\Omega_0.$    
Let $\chi$  be a non-negative cutoff function supported in $\partial_M\Omega_0.$ Moreover, let $\zeta$ be a non-negative 
cut-off function such that $\zeta = 1$ around $0$ and $\zeta = 0$ outside $[0, 1]$. Define 
$$
h = \chi \zeta\left(\frac{\operatorname{dist}(\cdot, \partial_M\Omega_0)}{\varepsilon}\right) 
\widetilde{A}.
$$
By shrinking the support of $\chi,$ we find that as  $\varepsilon\to0,$ the following holds
\begin{equation*}
 \frac{d}{dt}\Big|_{t=0}\sigma_1(\Omega_0,g(t))=-o(1)-d_n\int_{\partial_M\Omega_0} \langle h, A_g\rangle d\sigma_g.
 \end{equation*}
Choosing $\varepsilon$ sufficiently small, we conclude that  $\sigma(\Omega_0,g(t))>0$ for all sufficiently small positive $t$. The remainder of the argument proceeds as before.
\end{proof}

\section{Conformal deformation to constant negative scalar Curvature}\label{negative_section}

Let $(M,g)$ be a complete noncompact Riemannian manifold with nonempty boundary and scalar curvature $R_g\leq-c<0$.
This section aims to construct a complete conformal metric on a complete noncompact Riemannian manifold $(M,g)$ with prescribed scalar curvature $f$ and mean curvature $h$ at the boundary. Here, $f\in C^\infty(M)$ is a bounded and negative function, and $h\in C^\infty(\partial M)$ is non-positive. Achieving this goal necessitates finding a positive solution to equation \eqref{confchange} with $R_{\overline g}= f$ and $H_{\overline g} = h$. The existence of such a solution is established through an iterative scheme applied to a sequence of domains $\{\Omega_k\}$ that exhaust $M$.

First, we present the following key technical result, which will be crucial for proving uniform boundedness for an approximating sequence on the compact domains.

\begin{lemma}\label{estimate_sub}
Let $(M, g)$ be a complete Riemannian manifold with noncompact boundary. Suppose that the scalar curvature satisfies $R_g\leq-c<0$ and that the boundary is mean convex. Let $f\in C^\infty(M)$ and $h\in C^\infty(\partial M)$ with $f<0$, and let $\Omega\subset M$ be some bounded domain  with $\Sigma=\Omega\cap \partial M\not=\emptyset$. 
For every compact set $X \subset \Omega,$ there exists a  constant $C_0 > 0$ such that for any nonnegative weak solution $u \in H^1(\Omega)$  of 
    \begin{equation}\label{eq0000}
    \begin{cases}
        \Delta_g u \geq -fu^\frac{n+2}{n-2} + R_g u & \text{in } \Omega, \\
        \dfrac{\partial u}{\partial \eta}+H_gu \leq hu^{\frac{n}{n-2}} & \text{on } \Sigma,
    \end{cases}
    \end{equation}
with $hu^{\frac{2}{n-2}}\leq H_g$, we have
 $$
\max _{x \in X} u(x) \leqslant C_0
$$
\end{lemma}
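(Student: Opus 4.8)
The statement is a local a priori bound for nonnegative subsolutions of the Yamabe-type equation with nonlinear Robin boundary condition, and the natural strategy is a Moser iteration carried out up to the boundary. The key structural feature we exploit is the sign conditions: $R_g\le -c<0$, $f<0$, $h\le H_g u^{-2/(n-2)}$ (equivalently the boundary term $hu^{n/(n-2)}-H_gu\le 0$), and mean convexity $H_g\ge 0$. The plan is to test the differential inequality \eqref{eq0000} against $\varphi^2 u^{\beta}$ for $\beta\ge 1$ and a cutoff $\varphi$, integrate by parts, and observe that every ``bad'' term carries a favorable sign. Precisely, the interior right-hand side contributes $\int (-f)u^{\frac{n+2}{n-2}+\beta}\varphi^2 \ge 0$ on the wrong side only if we move it; instead we keep $-f>0$ bounded above by $\|f\|_\infty$ and absorb $u^{\frac{n+2}{n-2}}$ using that we only need the $L^\infty$ bound after we already control a high $L^p$ norm — so the first genuine step is to get an initial integral bound.

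\textbf{Step 1: an $L^{2^*}$ (or $L^{p_0}$ for some $p_0>2$) estimate.} Testing \eqref{eq0000} with $\varphi^2 u$ ($\beta=1$) and integrating by parts, the boundary contributes $\int_\Sigma \varphi^2 u(\partial u/\partial\eta)\le \int_\Sigma \varphi^2(hu^{\frac{n}{n-2}}-H_gu)\le 0$ by the hypothesis $hu^{2/(n-2)}\le H_g$; the term $\int R_g u^2\varphi^2\le -c\int u^2\varphi^2\le 0$ is also favorable; what remains is
\[
\int |\nabla u|^2\varphi^2 \;\le\; 2\int |\nabla u|\,|\nabla\varphi|\,u\,\varphi \;+\; \|f\|_{\infty}\int u^{\frac{2n}{n-2}}\varphi^2,
\]
and the first term is absorbed by Cauchy--Schwarz. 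The remaining obstruction is the critical term $\int u^{2^*}\varphi^2$: this is controlled because $u\in H^1(\Omega)$ already lies in $L^{2^*}(\Omega)$ by Sobolev embedding, so by absolute continuity of the integral we may choose the support of $\varphi$ small enough (covering $X$ by finitely many such small balls) that $\|f\|_\infty\big(\int_{\mathrm{supp}\,\varphi}u^{2^*}\big)^{2/n}$ is smaller than the Sobolev constant, and the critical term is absorbed into $\|\nabla(\varphi u)\|_{L^2}^2\ge c_S\|\varphi u\|_{L^{2^*}}^2$. This gives $\varphi u\in L^{2^*}$ with a bound, i.e. a local $L^{2^*}$ estimate; this ``small-energy'' absorption is the main subtlety and is exactly the standard $\varepsilon$-regularity mechanism for critical equations.

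\textbf{Step 2: Moser iteration.} With $u\in L^{p}$ for some $p>2^*$ locally — note once we are past $2^*$ the nonlinearity is subcritical relative to that integrability — we iterate: test with $\varphi^2 u^{\beta}$, $\beta_k = \big(\tfrac{n}{n-2}\big)^k$. The boundary term is again $\le 0$ (same sign computation, since $u^{\beta}\ge 0$ and $hu^{n/(n-2)}-H_gu\le 0$ multiplied by the nonnegative factor $u^{\beta-1}$ gives the right sign after the chain rule on $u^{(\beta+1)/2}$), the $R_gu^{\beta+1}$ term is $\le 0$, and the $-f u^{\frac{n+2}{n-2}+\beta}$ term is estimated by $\|f\|_\infty$ times an $L^{(\beta+1)n/(n-2)}$ norm that, by the $L^{p_0}$-bound with $p_0>2^*$, can be split via Hölder into a factor that is absorbed and a factor controlled by the previous iterate (one again uses a smallness of the domain to absorb the top-order piece at the first step, and subsequently the exponents are such that only already-controlled norms appear). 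Standard bookkeeping of the constants $C_k$ gives
\[
\|u\|_{L^{\infty}(X)} \;\le\; C(n,\Omega,X,\|f\|_\infty)\,\big(1+\|u\|_{L^{2^*}(\Omega')}\big)
\]
for an intermediate $X\subset\Omega'\Subset\Omega$, and by Step 1 the right side is bounded by a constant $C_0$ depending only on the allowed data.

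\textbf{Main obstacle.} The crux is handling the \emph{critical} exponent: a naive Moser iteration fails because the nonlinear term is critical for $H^1$. The resolution — and the only place real care is needed — is the $\varepsilon$-regularity step (Step 1), where one must shrink the cutoff support so that the local $L^{2^*}$-energy of $u$ falls below the Sobolev threshold, using that $u$ is a fixed $H^1$ function and covering the compact set $X$ by finitely many small balls. Everything after that is routine iteration, and the boundary term never fights back precisely because of the assumption $hu^{2/(n-2)}\le H_g$ together with mean convexity. One should also note at the outset that all test functions $\varphi^2 u^\beta$ must be truncated (replace $u$ by $\min(u,L)$ and let $L\to\infty$) to be legitimate in $H^1$; this is standard and suppressed in the sketch.
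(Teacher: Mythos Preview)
Your approach has a genuine gap: it does not deliver a bound that is \emph{uniform in $u$}, which is exactly what the lemma claims (the quantifiers are ``for every $X$ there exists $C_0$ such that for any $u$''). In your Step~1 you propose to absorb the critical term $\|f\|_\infty\int u^{2^*}\varphi^2$ by shrinking the support of $\varphi$ until the local $L^{2^*}$-mass of $u$ falls below the Sobolev threshold. But the radius needed for this depends on the particular solution $u$, and after absorption the surviving right-hand side still contains $\int u^2|\nabla\varphi|^2$, which is controlled only by $\|u\|_{L^2}$ --- again not bounded a priori. So the output of your Step~1 is an $L^{2^*}$ estimate that depends on $u$, and the Moser iteration in Step~2 inherits this dependence. (There is also a sign slip in your displayed inequality: since $-f>0$, the term $\int(-f)u^{2^*}\varphi^2$ sits with a \emph{plus} sign on the left, so moving it to the right as $+\|f\|_\infty\int u^{2^*}\varphi^2$ is either a sign error or a wasteful weakening.)

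The key observation you are missing --- and which the paper's proof uses --- is that the hypothesis $f<0$ makes the nonlinear term \emph{coercive}, not something to be absorbed. Testing \eqref{eq0000} with $u\varphi^n$ and integrating by parts, the boundary term is $\le 0$ (exactly as you note, via $hu^{2/(n-2)}\le H_g$), and one obtains
\[
(-\sup_\Omega f)\int_\Omega u^{\frac{2n}{n-2}}\varphi^n
\;\le\;
\Big(\int_\Omega u^{\frac{2n}{n-2}}\varphi^n\Big)^{\frac{n-2}{n}}
\Big(\tfrac{n^2}{4}\|\nabla\varphi\|_{L^n}^2+K\|\varphi\|_{L^n}^2\Big),
\]
after handling the gradient cross-term by Cauchy--Schwarz and applying H\"older to $\int u^2\varphi^{n-2}|\nabla\varphi|^2$ and $\int u^2\varphi^n$. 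Dividing through yields $\big(\int u^{2^*}\varphi^n\big)^{2/n}\le C(\varphi)$ with \emph{no dependence on $u$}. This replaces your Step~1 entirely, with no $\varepsilon$-regularity needed. The passage to $L^\infty$ is then the linear estimate: since $-fu^{(n+2)/(n-2)}\ge 0$ one may drop it and view $u$ as a subsolution of $\Delta_g u\ge R_0 u$ with $\partial u/\partial\eta\le 0$, and invoke the standard local maximum principle (Gilbarg--Trudinger, Theorem~8.17, together with its boundary version) to get $\sup_{B_r} u\le C r^{-n/p}\|u\|_{L^p}$ with $p=2n/(n-2)$.
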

\begin{proof}
The proof is motivated by the proof of Proposition 2.2.2 in \cite{hogg}. Let $\{B_r\left(p_i\right)\}_{i=1}^m$ be a finite cover of $X,$  where  $p_1, \ldots, p_m\in X$ and $r>0$ is chosen such that each ball $\overline{B_{2r}\left(p_i\right)} $ either is contained in the interior of $\Omega,$ or intersects $\partial_0\Omega$ in such a way that the intersection forms a Lipschitz domain. If $B_{2r}(p_i)$ is contained in the interior of $\Omega$, then the result follows from \cite[Proposition 2.2.2]{hogg}. 

If $B_{2r}(p_i)$ intersect $\partial_M\Omega$, then by \eqref{eq0000} we obtain
\begin{equation}\label{eq0001}
\begin{cases}
    \Delta_g u\geq R_0u& \mbox{ in }W_i,\\
    \dfrac{\partial u}{\partial\eta}\leq 0 & \mbox{ on }\partial_M W_i,
\end{cases}
\end{equation}
where $W_i=B_{2r}\left(p_i\right)\cap \Omega$ and $R_0=\inf_{B_{2r}\left(p_i\right)}R_g<0$. It follows from Theorem 8.17 of \cite{MR737190} and  Theorem 5.36 of \cite{MR3059278} that

\begin{equation}\label{estimatepde}
    \sup _X u\leq \sup _{B_r\left(p_i\right)} u \leqslant C r^{-n / p}\|u\|_{L^{p}\left(W_i\right)},
\end{equation}
 for any $p>0$ and some $i \in\{1, \cdots, m\}$. For any nonnegative function $\varphi \in C_0^{\infty}(\Omega)$, with $\varphi \equiv 1$ on $W_i$ we have
 \begin{equation}\label{eq009}
     \|u\|_{L^p(W_i)}^p\leq \int_\Omega u^p\varphi^{n}.
 \end{equation}

Since $\Omega$ is bounded there exists a positive constant $K>0$ such that $R_g\geq -K$ in $\Omega$. Multiply both sides of \eqref{eq0000} by $u \varphi^n,$ where  $\varphi \in C_0^{\infty}(\Omega)$, $\varphi\geq0$, with $\varphi \equiv 1$ on $W_i$, and integrate by parts to get
\begin{align*}
   -\int_{\Omega} fu^{\frac{2n}{n-2}} \varphi^n dv &\leq-\int_{\Omega} \varphi^n|\nabla u|^2 dv-n\int_{\Omega}  \varphi^{n-1} u \langle\nabla \varphi, \nabla u\rangle dv\\
 &+K \int_{\Omega} u^2 \varphi^n dv+\int_{\partial_M\Omega} u \varphi^n\frac{\partial u}{\partial\nu}d\sigma.
\end{align*}
By Cauchy-Schwarz and Young inequalities we have
$$-n\int_{\Omega}  \varphi^{n-1} u \langle\nabla \varphi, \nabla u\rangle dv\leq \int_{\Omega}\varphi^n|\nabla u|^2dv+\frac{n^2}{4}\int_\Omega u^2\varphi^{n-2}|\nabla\varphi|^2dv.$$
 Thus, by \eqref{eq0001} and Hölder's inequality, we get
\begin{align*}
   -\sup_\Omega f\int_{\Omega} u^{\frac{2n}{n-2}} \varphi^n dv \leq\frac{n^2}{4} \int_\Omega u^2 \varphi^{n-2}|\nabla \varphi|^2 dv+K \int_{\Omega} u^2 \varphi^n dv\\
\leq \left(\int_{\Omega} u^{\frac{2n}{n-2}} \varphi^n dv\right)^{\frac{n-2}{n}}\left(\frac{n^2}{4}\|\nabla \varphi\|^2_{L^n(B_{2r}(p_i))}+ K\|\varphi\|^2_{L^n(B_{2r}(p_i))}\right).
\end{align*}
Since $\sup_\Omega f<0$, we obtain
$$
\left( \int_{\Omega} u^{\frac{2n}{n-2}} \varphi^n dv\right)^\frac{2}{n} \leq C \left(\|\nabla \varphi\|_{L^n(\Omega)}^2+\|\varphi\|_{L^n(\Omega)}^2\right)
$$
The result follows by \eqref{estimatepde}, \eqref{eq009} and setting $p=2n/(n-2)$. 
\end{proof}



We now present the main result of this section.
\begin{theorem}(Theorem \ref{teo_constan})\label{teo_constant}
Let $(M, g)$ be a complete Riemannian manifold with noncompact boundary. Let $f\in C^\infty(M)$ and $h\in C^\infty(\partial M)$ be bounded functions. Suppose that the scalar curvature satisfies $R_g\leq-c< 0$ for some positive constant $c$, and that $f< 0$. In addition, suppose that either
\begin{itemize}
    \item[(a)] $H_g=h=0$; or 
    \item[(b)] $0< c_1\leq H_g<\infty$ and $0< h$, for a positive constant $c_1.$
\end{itemize}
Then, there exists a complete conformal metric with scalar curvature equal to $f$ and mean curvature equal to $h$.
\end{theorem}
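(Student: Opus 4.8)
\medskip

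The plan is to solve the boundary value problem \eqref{confchange} with $R_{\overline g}=f$ and $H_{\overline g}=h$ by an exhaustion/iteration argument producing sub- and super-solutions, and then passing to the limit. First I would fix an exhaustion $\Omega_0\Subset\Omega_1\Subset\cdots$ of $M$ by bounded domains as in Section \ref{sec005}, with $\Sigma_k=\Omega_k\cap\partial M\neq\emptyset$. On each $\Omega_k$ I would consider the Dirichlet-type problem for the conformal Laplacian
\begin{equation*}
\begin{cases}
-\dfrac{4(n-1)}{n-2}\Delta_g u+R_g u=f\,u^{\frac{n+2}{n-2}} & \text{in }\Omega_k,\\[2mm]
\dfrac{2}{n-2}\dfrac{\partial u}{\partial\eta}+H_g u=h\,u^{\frac{n}{n-2}} & \text{on }\Sigma_k,\\[2mm]
u=\varepsilon & \text{on }\partial_0\Omega_k,
\end{cases}
\end{equation*}
for a small constant $\varepsilon>0$. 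The monotone iteration (method of sub- and super-solutions, adapted to the mixed Dirichlet/oblique boundary condition, cf. the classical scheme of Aviles--McOwen \cite{MR925121}) gives a positive solution $u_k$ on $\Omega_k$ once one exhibits an ordered pair $u_-\le u_+$. For the subsolution I would take a small constant $u_-\equiv\delta$: since $f<0$ and $R_g\le -c<0$, for $\delta$ small one has $R_g\delta\ge f\delta^{\frac{n+2}{n-2}}$ (so the interior inequality holds in the subsolution sense), and in case (a) the Neumann condition is trivial while in case (b) the pinching $0<c_1\le H_g$ together with $h>0$ bounded makes $H_g\delta\ge h\delta^{\frac{n}{n-2}}$ hold for $\delta$ small — this is exactly the condition $h\delta^{\frac{2}{n-2}}\le H_g$ appearing in Lemma \ref{estimate_sub}. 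For the supersolution I would take a large constant $u_+\equiv K$ satisfying $R_g K\le f K^{\frac{n+2}{n-2}}$ (uses $\sup_M R_g<0$, $\sup_M f<0$, which forces $K$ large) and $H_g K\le h K^{\frac{n}{n-2}}$ on the boundary, and $K\ge\varepsilon$. Choosing $\delta\le\varepsilon\le K$ gives the ordered pair and hence $u_k$ with $\delta\le u_k\le K$... wait, the lower constant bound fails near $\partial_0\Omega_k$ only if $\varepsilon<\delta$; taking $\varepsilon$ between $\delta$ and $K$ keeps $\delta\le u_k\le K$ throughout.

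\medskip

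Next I would establish local uniform bounds independent of $k$ so as to extract a convergent subsequence. The uniform upper bound on compact subsets is precisely Lemma \ref{estimate_sub}: each $u_k$ (for $k$ large, so that the given compact set $X$ lies in $\Omega_k$) is a genuine solution, hence a subsolution, of \eqref{eq0000}, and the hypothesis $hu_k^{\frac{2}{n-2}}\le H_g$ follows from $u_k\le K$ and case-(b) pinching (trivially in case (a)), so $\max_X u_k\le C_0$ with $C_0$ independent of $k$. For the uniform lower bound on compacts I would use a Harnack inequality (interior plus boundary Harnack up to $\Sigma_k$, e.g. via \cite{MR737190}): since $u_k>0$ solves a linear equation with locally bounded coefficients (the nonlinear terms $f u_k^{4/(n-2)}$ and $h u_k^{2/(n-2)}$ are bounded on compacts by the upper estimate), $u_k$ is bounded below on compact subsets away from $\partial_0\Omega_k$ by a constant times $u_k$ at a fixed interior point; the latter is bounded below because $\Delta_g$-subsolution comparison with the constant $\delta$ near a fixed ball forces $\inf u_k\ge$ some fixed positive constant. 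With $\|u_k\|_{C^0}$ controlled on compacts, Schauder (interior and boundary, using the oblique condition) gives $C^{2,\alpha}_{loc}$ bounds, and Arzelà--Ascoli plus a diagonal argument yields a subsequence $u_k\to u$ in $C^2_{loc}(M)$ with $u>0$ solving \eqref{confchange} with the prescribed right-hand sides on all of $M$; in case (a) the boundary condition reads $\partial u/\partial\eta+H_g u=0$, i.e. $\partial u/\partial\eta=0$ since $H_g=0$.

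\medskip

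Finally I must show the conformal metric $\overline g=u^{4/(n-2)}g$ is complete. Since $g$ is complete, by the length-comparison argument used at the end of the proof of Theorem \ref{main1} it suffices to bound $u$ below by a positive constant on all of $M$ — but the lower bounds above were only local. Here I would instead argue directly with a divergent curve: suppose $\gamma$ is a path of finite $\overline g$-length escaping every compact set; I would derive a contradiction by a barrier/comparison argument showing that $u$ cannot decay too fast along a minimizing $g$-geodesic ray. Concretely, using $R_g\le -c<0$ and $f$ bounded, the function $u$ satisfies $-\Delta_g u\ge \tfrac{n-2}{4(n-1)}(-R_g u + f u^{(n+2)/(n-2)})$; on the region where $u$ is small this is $\ge \tfrac{c(n-2)}{8(n-1)}u>0$, so $u$ is a positive supersolution of a Schrödinger operator with a definite zeroth-order term, and such supersolutions cannot decay exponentially along a geodesic ray faster than the corresponding rate — comparing with an explicit radial barrier on geodesic balls (as in Aviles--McOwen) gives $u\ge c'e^{-\lambda r}$ along the ray, whence $\int u^{2/(n-2)}\,ds=\infty$ along any divergent curve, contradicting finite $\overline g$-length. \textbf{The main obstacle} I anticipate is exactly this completeness step: the uniform-in-$k$ estimates give only a pointwise positive limit, not a uniform lower bound, and turning the differential inequality satisfied by $u$ into a quantitative lower decay estimate valid along arbitrary divergent curves (not just geodesics, and near the noncompact boundary $\partial M$ where one must use the Neumann/oblique condition to run the comparison up to the boundary) requires care — the mean-convexity assumption $H_g\ge c_1>0$ in case (b) is what lets the barrier argument be carried out up to $\partial M$, which is presumably why the pinching hypothesis, rather than mere positivity of $H_g$, is needed.
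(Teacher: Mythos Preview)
Your strategy---exhaustion, constant sub/super-solutions on each $\Omega_k$, Lemma~\ref{estimate_sub} for upper bounds on compacta, then Schauder estimates and a diagonal argument---matches the paper's exactly. The one place you diverge is the completeness step, and there you work far harder than necessary. You already recorded that $u_k\ge\delta$ on all of $\Omega_k$; the point you are missing is that $\delta$ can be taken \emph{independent of $k$}. Indeed (after correcting the sign below), the interior subsolution inequality for a constant reads $|R_g|\ge|f|\,\delta^{4/(n-2)}$, which holds globally once $\delta^{4/(n-2)}\le c/\sup|f|$, and in case~(b) the boundary inequality reduces to $\delta^{2/(n-2)}\le c_1/\sup h$. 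This is precisely where the boundedness of $f$ and, in case~(b), the lower bound $H_g\ge c_1$ together with boundedness of $h$, enter. Hence $u\ge\delta>0$ on all of $M$, and completeness follows at once from the length comparison you already cited from the proof of Theorem~\ref{main1}. The ``main obstacle'' you anticipate---decay estimates via barriers along rays, boundary Harnack up to $\partial M$---simply is not there; the paper's completeness argument is one sentence.

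Two smaller corrections. First, your interior sub/super inequalities are reversed: for a constant, the subsolution condition is $R_g\delta\le f\delta^{(n+2)/(n-2)}$ (equivalently $|R_g|\ge|f|\delta^{4/(n-2)}$), which does hold for small $\delta$; the inequality $R_g\delta\ge f\delta^{(n+2)/(n-2)}$ you wrote fails for small $\delta$. Second, the supersolution $K$ cannot in general be chosen uniformly in $k$, since nothing bounds $|R_g|$ from above or $|f|$ away from zero (so ``$\sup_M f<0$'' is not available). The paper lets $u_+^k$ depend on $\Omega_k$ and relies entirely on Lemma~\ref{estimate_sub} for the uniform upper bound on compacta; your appeal to that lemma is therefore the essential step, not a redundancy with $u_k\le K$.
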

\begin{proof}
We need to find a solution to \eqref{confchange} with $R_{\overline g}=f$ and $H_{\overline g}=h$. The proof uses  the sub- and supersolution method.

Consider an exhaustion $\{\Omega_j\}_{j=0}^\infty$ as described in the beginning of Section \ref{sec001}. Let us prove the existence of a solution to the problem
\begin{equation}\label{eq011}
    \begin{cases}
-\dfrac{4(n-1)}{n-2}\Delta_g u+R_{g} u=fu^{\frac{n+2}{n-2}} & \text { in } \Omega_k, \\ 
\dfrac{2}{n-2} \dfrac{\partial u}{\partial \eta}+H_{g} u=hu^{\frac{n}{n-2}} & \text { on } \partial_M\Omega_k,\\
u=1 & \mbox{ on }\partial_0\Omega_k.
\end{cases}
\end{equation}
Under the assumptions $R_g\leq-c<0$, $-C\leq f<0$, (a) and (b) we can choose two constants $u_-^k$ and $u^k_+$ with $0<u_-^k<1<u_+^k$ such that
$$(u_-^k)^{\frac{4}{n-2}}(\inf_{\Omega_k}f) \geq \sup_{\Omega_k} R_g\quad\mbox{ and }\quad (u_-^k)^{\frac{2}{n-2}}(\sup_{\Omega_k}h)\leq\inf_{\Omega_k}H_g,$$
and
$$(u_+^k)^{\frac{4}{n-2}}(\sup_{\Omega_k}f) \leq \inf_{\Omega_k} R_g\quad\mbox{ and }\quad (u_+^k)^{\frac{2}{n-2}}(\inf_{\Omega_k}h)\geq\sup_{\Omega_k}H_g.$$
This implies that $u_-^k$ and $u_+^k$ are sub- and supersolution of \eqref{eq011}. By \cite[Theorem 2.3.1]{MR463624}, for each $k$, there exists a solution  $u_k \in W^{2,p}(\Omega_k)$ to \eqref{eq011} satisfying $u^k_{-} \leq u_k \leq u^k_{+}$.  Furthermore, applying standard elliptic regularity theory, we conclude that $u_k \in C^\infty(\Omega)$.

It remains to  construct a solution defined on $M$. 

Consider the sequence $\{u_k\}_{k \geqslant 4}$ on $\Omega_3$. By Lemma \ref{estimate_sub}, there exists a constant $C_0=C_0(\Omega_3)$ such that $u_k \leqslant C_0$ for all $x \in \overline\Omega_3$ and $k \geqslant 4$. 
Standard elliptic estimates imply that $\|u_k\|_{W^{2,p}(\Omega_1)}$ is uniformly
bounded for any $p > 1,$ and therefore, applying Sobolev embedding and elliptic regularity theory with (oblique) boundary conditions (see e.g. \cite[Lemma 6.29]{MR737190}), yields a uniform bound for the sequence $u_k$ in the $C^{2,\alpha}$-norm, for some $\alpha\in(0,1)$.  By  compactness, we can extract a subsequence, which we will call $\{u_{k_l}^1\}_l$, from $\{u_k\}_{k\geq 4}$, such that $\{u^1_{k_l}\}_l$ converges in $C^2$ to a solution of equation \eqref{eq011} on $\Omega_2$. Repeating this process with the sequence $\{u^1_{k_l}\}_l$ on $\overline{\Omega}_4$, we obtain a further subsequence $\{u^2_{k_l}\}_l$, which converges in $C^2$ to a solution of equation \eqref{eq011} on $\Omega_3$.  By induction, we obtain a sequence of subsequences $\{u^i_{k_l}\}_l$ which connverges in $C^2$ to a solution of equation \eqref{eq011} on $\Omega_{i+1}$. Now, consider the diagonal sequence $\{u^i_{k_i}\}_i$. For $x\in M$ define
$$
u(x)=\lim _{i \rightarrow \infty} u^i_{k_i}(x).
$$
We can deduce that above limit is well-defined anywhere in $M$ and that $u\in C^2(\overline M)$. In addition, the assumptions $-C\leq f<0$ and $0<h\leq c_2$ imply that we can choose $u_-^k=u_->0$ uniform in $k$. Therefore, since $u_k\geq u_->0,$ the function limit $u$  is strictly positive, due to  $C^2$ convergence, and also the conformal metric $\overline g=u^\frac{4}{n-2}g$ is complete. This complete the proof of the theorem.
\end{proof}

\bibliography{references.bib}
\bibliographystyle{acm}

\end{document}